\def\bibname{}
\newtheorem{theorem}{Theorem}[section]
\newtheorem{lemma}[theorem]{Lemma}
\newtheorem{corollary}[theorem]{Corollary}
\newtheorem{proposition}[theorem]{Proposition}
\newtheorem{definition}[theorem]{Definition}
\def\R{\mathbb R}
\def\N{\mathbb N}
\def\<{\langle}
\def\>{\rangle}
\def\dist{{\rm dist}}
\newcommand{\romd}{\mathrm{d}}
\newcommand{\romdist}{\mathrm{dist}}
\newcommand{\lspan}{\mathrm{span}}
\newcommand*\patchAmsMathEnvironmentForLineno[1]{%
  \expandafter\let\csname old#1\expandafter\endcsname\csname #1\endcsname
  \expandafter\let\csname oldend#1\expandafter\endcsname\csname end#1\endcsname
  \renewenvironment{#1}%
     {\linenomath\csname old#1\endcsname}%
     {\csname oldend#1\endcsname\endlinenomath}}%
\newcommand*\patchBothAmsMathEnvironmentsForLineno[1]{%
  \patchAmsMathEnvironmentForLineno{#1}%
  \patchAmsMathEnvironmentForLineno{#1*}}%
\begin{document}

\title{Embedding properties of sets with finite box-counting dimension}

\author{Alexandros Margaris and James C. Robinson\\
   Mathematics Institute\\ University of Warwick, Coventry\\ CV4 7AL, UK}

\maketitle

\begin{abstract}
In this paper we study the regularity of embeddings of finite--dimensional subsets of Banach spaces into Euclidean spaces. In 1999, Hunt and Kaloshin [Nonlinearity 12 1263-1275] introduced the thickness exponent and proved an embedding theorem for subsets of Hilbert spaces with finite box--counting dimension. In 2009, Robinson [Nonlinearity 22 711-728] defined the dual thickness and extended the result to subsets of Banach spaces. Here we prove a similar result for subsets of Banach spaces, using the thickness rather than the dual thickness. We also study the relation between the box-counting dimension and these two thickness exponents for some particular subsets of $\ell_{p}$.
\end{abstract}

\section{Introduction}

The main question we want to address in this paper is how we can understand the notion that an arbitrary subset $X$ of a Banach space is `finite-dimensional'. There are many possible dimensions that we can consider and each of them provides `nice' embedding properties into Euclidean spaces. There have been embedding results concerning subsets with finite Hausdorff dimension by Ma\~n\'e \cite{Man}, finite box-counting dimension by Foias \& Olson \cite{FO}, finite doubling dimension by Assouad \cite{Ass}, Olson \& Robinson \cite{OR} and by Naor \& Neiman \cite{NN}. However, here we are particularly interested in subsets with finite upper box--counting dimension, whose definition we now recall. 

\begin{definition}\label{BC2}
Suppose that $(E,\|\cdot\|)$ is a normed space. Let $X$ a compact subset of $E$ and let $N(X, \epsilon)$ denote the minimum number of balls of radius $\epsilon$ with centres in $X$ required to cover $X$. The upper box-counting dimension of $X$ is 

\begin{equation}\label{BC3}
\romd_{B}(X) = \limsup_{\epsilon \rightarrow 0} \frac{\log N(X, \epsilon)}{-\log \epsilon}.
\end{equation}

\end{definition}

It follows from the definition that if $d > \romd_{B}(X)$, then there exists some positive constant $C=C_{d}$, such that 

\begin{equation}
N(X, \epsilon) \leq C \epsilon^{-d}.
\end{equation}

For the rest of the paper, we will refer to $\romd_{B}(X)$ as the box--counting dimension of $X$.

In 1996, Foias and Olson treated the case where $X$ is a subset of a Hilbert space $H$ with finite box--counting dimension and proved that there exists a Lipschitz map $L \colon H \to \mathbb{R}^{k}$ with a H\"older inverse on the image of $X$. In 1999, Hunt and Kaloshin established the existence of a `large' set of linear maps $L \colon H \to \mathbb{R}^{k}$ with H\"older continuous inverses on the image of $X$. In order to do so, they introduced a new quantity, called the thickness exponent of $X$, which measures how well an arbitrary subset of a Banach space can be approximated by finite--dimensional linear subspaces. We note that all Banach spaces we mention from now on are real.

\begin{definition}
Let $X$ be a subset of a Banach space $\mathfrak{B}$. The thickness exponent of $X$ in $\mathfrak{B}$, $\tau(X, \mathfrak{B})$ is defined as:

\[ \tau(X, \mathfrak{B}) = \limsup_{\epsilon \rightarrow 0} \frac{\log d(X,\epsilon)}{-\log\epsilon},\]
where $d(X,\epsilon)$ denotes the smallest dimension of those linear subspaces $V$ that satisfy
\[\romdist_{\mathfrak{B}}(x,V) \leq \epsilon \, \, \mbox{for all} \, \, x \, in \, X.\]
If no such subspace exists, we set $d(X, \epsilon) = \infty$ and we adopt a similar convention throughout this paper.

\end{definition}
Note that when $\tau > \tau(X)$, then there exists some positive constant $C$ such that

\[d(X, \epsilon) \leq C \epsilon^{-\tau}.\]

It is easy to see that the thickness exponent is always bounded above by the box--counting dimension. 
Indeed, if we cover $X$ by $N(X,\epsilon)$ balls of radius $\epsilon$ and let $V$ be the subspace of $\mathfrak{B}$ that is spanned by the centres of these balls, then every element of $X$ is within $\epsilon$ of $V$.

Moreover, Hunt and Kaloshin introduced a probability measure $\mu$ defined on a particular class of linear maps $L \colon H \to \mathbb{R}^{k}$. Using this measure $\mu$ and the above exponent, they proved the following embedding theorem for subsets of Hilbert spaces.

\begin{theorem}[Hunt \& Kaloshin, 1999]\label{ET1}
Let $X$ be a compact subset of a real Hilbert space $H$ with thickness exponent $\tau(X)$ and box--counting dimension $\romd_{B}(X) < \infty$. Then for any integer $k > 2d_{B}(X)$ and any given $\theta$ with
\[ 0 < \theta < \frac{k -2d_{B}(X)}{k\left(1 + \frac{\tau(X)}{2}\right)},\]
$\mu$--almost every linear map $L \colon H \to \mathbb{R}^{k}$ satisfies
\begin{equation}\label{3.10}
\|x-y\| \leq C_{L}|Lx-Ly|^{\theta}, \, \, \, \forall \, \, \, x,y \in X, \,\, \mbox{for some}\,\, C_{L}>0.
\end{equation}
In particular, every such $L$ is bijective from $X$ onto $L(X)$ with a H\"older continuous inverse.
\end{theorem}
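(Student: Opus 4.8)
The plan is to prove this embedding theorem using a probabilistic (prevalence) argument. The strategy is to show that the set of "bad" linear maps—those failing the Hölder estimate \eqref{3.10}—has measure zero with respect to $\mu$, so that $\mu$-almost every map is good. I would first reduce the problem to a uniform estimate over pairs of points: for the inequality $\|x-y\| \leq C_L |Lx - Ly|^\theta$ to hold over all $x, y \in X$, it suffices to control the measure of maps $L$ for which $|Lx - Ly|$ is "too small" relative to $\|x-y\|$. The natural object is the normalized difference $z = (x-y)/\|x-y\|$, so the task becomes estimating, for a unit vector $z$ in the difference set, the probability that $|Lz|$ is smaller than some threshold.

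Next I would establish the core probabilistic lemma: for a fixed unit vector $z \in H$, the $\mu$-measure of maps $L$ with $|Lz| < \delta$ decays like a power of $\delta$, with the power governed by $k$ (since $L$ maps into $\mathbb{R}^k$, one expects $\mu(|Lz| < \delta) \lesssim (\delta/\rho(z))^k$, where $\rho(z)$ measures how well $z$ is captured by the finite-dimensional structure underlying $\mu$). Here the thickness exponent enters: the measure $\mu$ constructed by Hunt and Kaloshin concentrates its randomness on a high-dimensional subspace that approximates $X$ to within $\epsilon$, and $\tau(X)$ controls how the dimension of that subspace grows as $\epsilon \to 0$. A component of $z$ lying in the approximating subspace is what makes $|Lz|$ typically large, while the thickness bound $d(X,\epsilon) \leq C\epsilon^{-\tau}$ quantifies the error when $z$ has a small component there.

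Then I would combine this pointwise probability estimate with a covering/union-bound argument to pass from a single fixed $z$ to all pairs in $X$ simultaneously. This is where the box-counting dimension is used: I would partition the scales of $\|x-y\|$ dyadically, and at each scale cover the relevant difference vectors by an $N(X,\epsilon)$-type net whose cardinality is bounded by $\epsilon^{-d}$ for any $d > \romd_B(X)$. For each scale I would sum (via Borel–Cantelli) the probabilities that some map fails the Hölder estimate on that scale; the requirement that this sum converge is precisely what forces the stated upper bound on $\theta$. Balancing the covering cardinality $\epsilon^{-d}$ against the probability decay $\delta^k$, with $\delta$ tied to $\epsilon^\theta$ and the thickness correction contributing the factor $1 + \tau/2$ in the denominator, yields the threshold $\theta < (k - 2d_B(X))/(k(1 + \tau(X)/2))$.

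The main obstacle I expect is the careful construction and analysis of the measure $\mu$ so that the pointwise decay estimate $\mu(|Lz| < \delta) \lesssim \delta^k$ holds \emph{uniformly enough} to survive the summation over all scales and net points. In infinite dimensions one cannot use a single Gaussian or Lebesgue-type measure, so $\mu$ must be a well-chosen measure on a space of finite-rank maps whose "active" subspace is matched to the thickness scale; controlling the interaction between the fixed direction $z$ and this subspace—especially ensuring the power of $\delta$ is genuinely $k$ and the constants do not blow up as $\epsilon \to 0$ faster than the thickness bound allows—is the delicate heart of the argument. Once that uniform estimate is secured, the union bound and Borel–Cantelli steps are comparatively routine.
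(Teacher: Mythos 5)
Your outline reproduces essentially the same approach as the paper's machinery: the paper states this theorem without proof (citing Hunt and Kaloshin), but the identical scheme --- the measure $\mu$ built from balls in a sequence of finite-dimensional subspaces matched to the thickness scale (Section \ref{M}), the pointwise decay estimate of Lemma \ref{1.6} raised to the $k$-th power, dyadic scales on $\|x-y\|$, coverings of $X-X$ of cardinality of order $\epsilon^{-2d}$, and Borel--Cantelli --- is exactly what the paper deploys in its proof of Theorem \ref{ET5}, and your balancing of exponents correctly yields the stated threshold on $\theta$. The one point you leave implicit is why thickness enters as $\tau/2$ rather than $\tau$: in a Hilbert space one uses an orthonormal basis of the approximating subspace, so some coordinate functional $g$ satisfies $|g(z)| \geq d_n^{-1/2}\|z_n\|$ with loss only $d_n^{-1/2} \sim \epsilon^{-\tau/2}$, and it is precisely the failure of this in a Banach space (an Auerbach basis gives only $d_n^{-1}$) that forces the weaker exponent range in the paper's Theorem \ref{ET5}.
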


By using the fact that the thickness is bounded above by the box--counting dimension, the above theorem can be restated such that the range of the exponent $\theta$ depends solely on the box--counting dimension.

The authors attempted to extend the theorem for subsets of Banach spaces and their proof relies on the claim that there exists a linear isometry from the dual of any finite--dimensional subspace of $\mathfrak{B}$ to a linear subspace of the dual of $\mathfrak{B}$. However, Kakutani \cite{Kak} proved that this can only be true in the context of a Hilbert space. To circumvent this problem, Robinson \cite{Rob} introduced a new exponent, the `dual thickness' which was defined based on an approximation required in the course of Hunt and Kaloshin's argument.  

\begin{definition}
Suppose that $X$ is a subset of a Banach space $\mathcal{B}$ and for every $\epsilon,\theta > 0$ let 
$\romd_{\theta}\left(X,\epsilon\right)$ denote the minimum dimension of all those subspaces $U$ of  $\mathcal{B}^{*}$ with the property that for every $x,y \in X$ with $\|x-y\|\geq \epsilon$, there exists some $\phi \in U$, such that

\[\left|\phi(x-y)\right| \geq \epsilon^{1+\theta}.\]
Then, for every $\theta >0$, we define
\[\tau_{\theta}^{*}\left(X\right) = \limsup_{\epsilon \rightarrow 0}\frac{\log\romd_{\theta}\left(X,\epsilon\right)}{-\log\epsilon},\] and then set
\[\tau^{*}\left(X\right) = \lim_{\theta \rightarrow 0} \tau_{\theta}^{*}\left(X\right).\]
\end{definition}
This admittedly unwieldy definition allows for the following result.

\begin{theorem}[Robinson, 2009]\label{ET2}
Let $X$ be a compact subset of a Banach space $\mathfrak{B}$ with dual thickness exponent $\tau^{*}(X)<\infty$ and box--counting dimension $\romd_{B}(X)<\infty$. Then for any integer $k > 2d_{B}(X)$ and any given $\theta$ with
\[0 < \theta < \frac{k -2d_{B}(X)}{k\left(1 + \tau^{*}(X)\right)},\]
$\mu$--almost every linear map $L \colon \mathfrak{B} \to \mathbb{R}^{k}$ satisfies

\begin{equation}\label{3.10}
\|x-y\| \leq C_{L}|Lx-Ly|^{\theta}, \, \, \, \forall \, \, \, x,y \in X, \,\, \mbox{for some}\,\, C_{L}>0.
\end{equation}
In particular, every such $L$ is bijective from $X$ onto $L(X)$ with a H\"older continuous inverse.
\end{theorem}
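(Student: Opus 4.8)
The plan is to follow the probabilistic (prevalence) method of Hunt and Kaloshin, using Robinson's dual thickness in place of the orthogonal-projection estimates available in Hilbert space. Throughout, I write $\beta$ for the parameter appearing in the dual thickness (to avoid a clash with the H\"older exponent $\theta$), so that $\romd_{\beta}(X,\epsilon)$ and $\tau^{*}_{\beta}(X)$ denote the quantities called $\romd_{\theta}$ and $\tau^{*}_{\theta}$ in the definition above; I fix $d>\romd_{B}(X)$ and a small $\beta>0$. It suffices to produce a measure $\mu$ on linear maps $L\colon \mathfrak{B}\to\RR^{k}$ for which $\|L\|$ is almost surely finite and such that, for $\mu$-almost every $L$, there is a constant $c_{L}>0$ with $|Lx-Ly|\geq c_{L}\|x-y\|^{1/\theta}$ for all $x,y\in X$; the desired H\"older estimate is then immediate with $C_{L}=c_{L}^{-\theta}$. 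I would establish this lower bound by a Borel--Cantelli argument over the dyadic scales $\epsilon_{n}=2^{-n}$: grouping pairs according to the dyadic value of $\|x-y\|\approx\epsilon_{n}$, it is enough to show that the $\mu$-measure of the set of $L$ that fail the bound at scale $n$ is summable in $n$.

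At scale $n$ I would first reduce from arbitrary pairs to pairs of centres of a fine cover. Set $\delta_{n}=\epsilon_{n}^{1/\theta}$, the target value of $|Lx-Ly|$ dictated by the exponent $\theta$, and cover $X$ by $N(X,r_{n})$ balls of radius $r_{n}\approx\delta_{n}$ with centres $c_{1},\dots,c_{N}$. For $x,y$ with $\|x-y\|\approx\epsilon_{n}$, choosing centres $c_{i},c_{j}$ within $r_{n}$ of $x,y$ gives $|Lx-Ly|\geq |L(c_{i}-c_{j})|-2\|L\|r_{n}$; since $r_{n}\approx\delta_{n}$ the perturbation term is controlled once $\|L\|$ is finite, so the problem reduces to the event that $|L(c_{i}-c_{j})|<C\delta_{n}$ for some pair of centres with $\|c_{i}-c_{j}\|$ of order $\epsilon_{n}$. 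The number of such pairs is at most $N(X,r_{n})^{2}\leq C r_{n}^{-2d}\approx\epsilon_{n}^{-2d/\theta}$. It is precisely this \emph{fine} covering scale $r_{n}\approx\delta_{n}$, forced by the linear perturbation term, that produces the exponent $2d/\theta$ and ultimately the factor $k-2d$ in the numerator of the admissible range for $\theta$.

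The crux is the per-pair probability estimate. The measure $\mu$ should be built so that each of the $k$ components of $L$ is, up to a fixed part, a random linear combination of a basis $e_{1},\dots,e_{m_{n}}$ of a subspace $U_{n}\subset\mathfrak{B}^{*}$ realising $\romd_{\beta}(X,\epsilon_{n})=m_{n}\leq C\epsilon_{n}^{-\tau^{*}_{\beta}(X)}$. For a fixed difference $z=c_{i}-c_{j}$ with $\|z\|$ of order $\epsilon_{n}$, the defining property of the dual thickness supplies some $\phi\in U_{n}$ with $|\phi(z)|\geq\epsilon_{n}^{1+\beta}$, so each component $L_{\ell}z$ is a scalar random variable whose spread is at least of order $\epsilon_{n}^{1+\beta}/m_{n}$. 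Here the factor $m_{n}$, rather than $\sqrt{m_{n}}$, is the price of normalising a functional by an $\ell^{1}$/operator-norm bound in the absence of an inner product, and it is exactly what promotes the thickness to a full power of $\tau^{*}$ (compared with the $\tau/2$ of the Hilbert case). An anti-concentration estimate for each component, together with independence across the $k$ components, then yields $\mu\{L:|Lz|<\delta_{n}\}\leq C(\delta_{n}m_{n}/\epsilon_{n}^{1+\beta})^{k}$.

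Combining the two bounds, the bad $\mu$-measure at scale $n$ is at most a constant times $\epsilon_{n}^{-2d/\theta}(\delta_{n}m_{n}/\epsilon_{n}^{1+\beta})^{k}=\epsilon_{n}^{[\,k-2d-k\theta(1+\beta+\tau^{*}_{\beta}(X))\,]/\theta}$, which is summable in $n$ precisely when $\theta<(k-2d)/[k(1+\beta+\tau^{*}_{\beta}(X))]$; letting $d\downarrow\romd_{B}(X)$ and $\beta\downarrow 0$ recovers the stated range for $\theta$. Borel--Cantelli then furnishes, for $\mu$-a.e.\ $L$, the lower bound $|Lx-Ly|\geq c_{L}\|x-y\|^{1/\theta}$ for all pairs at all sufficiently small scales, and the finitely many coarse scales are absorbed into the constant since $X$ is compact and $L$ is bounded. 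I expect the principal obstacle to be the probability step: concretely, constructing a single measure $\mu$ that handles all scales simultaneously (the genuinely infinite-dimensional, prevalence part of the argument) and proving the anti-concentration estimate with the correct normalisation, so that the dimension of $U_{n}$ enters as $m_{n}$ and not $\sqrt{m_{n}}$. Verifying that $\|L\|<\infty$ almost surely, which is needed both for the perturbation reduction and for the finiteness of $C_{L}$, is a further point that will require care.
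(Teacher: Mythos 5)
Your proposal is correct and follows essentially the same route as the paper: the measure you are seeking is exactly the construction $\mathbb{E}_{\alpha}(\mathcal{V})$ recalled in Section \ref{M} (components $L_{i}=\sum_{n}n^{-\alpha}\phi_{i,n}$ with $\phi_{i,n}$ drawn uniformly from the unit ball of the dual-thickness subspace $U_{n}$, which makes $\|L\|$ uniformly bounded on the support), and your anti-concentration estimate with the factor $m_{n}$ rather than $\sqrt{m_{n}}$ is precisely Lemma \ref{1.6}, so both obstacles you flag at the end are resolved by that machinery. Your dyadic Borel--Cantelli bookkeeping, the exponent count $[\,k-2d-k\theta(1+\beta+\tau^{*}_{\beta}(X))\,]/\theta$, and the limits $d\downarrow \romd_{B}(X)$, $\beta\downarrow 0$ (legitimate since $\tau^{*}_{\beta}(X)\leq\tau^{*}(X)$ for every $\beta>0$) match the scheme the paper itself uses for the analogous Theorem \ref{ET5}, where the only difference is that the subspaces are built from Auerbach bases of thickness-approximating subspaces instead of your $U_{n}$.
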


In Section \ref{S4} we prove that the dual thickness is bounded above by twice the box--counting dimension of $X$ which gives a range of $\theta$ independent of the dual thickness.
In particular, for any \begin{equation}\label{Range}0 < \theta < \frac{1}{1 + 2\romd_{B}(X)},
\end{equation} we can find an embedding into $\mathbb{R}^{k}$, for large enough $k$, such that the inverse is $\theta$-H\"older continuous. 

There is no known general relation between the thickness and the dual thickness in the context of a Banach space. However, Robinson \cite{JCR} proved that zero thickness implies zero dual thickness, which immediately implies that subsets of Banach spaces with $\tau(X) = 0$ admit embeddings for any positive exponent $\theta <1$. In Section \ref{S3}, we establish the same embedding result for $\tau(X)=0$ directly, i.e. without having to use the dual thickness.

In Section \ref{S2}, we consider $X$ as a compact subset of a Banach space with finite box-counting dimension and provide an embedding into a Hilbert space with a bound on the H\"older exponent of the inverse that depends on the box--counting dimension of $X$. As a corollary of this argument and Theorem \ref{ET1} we immediately obtain an embedding into an Euclidean space that does not require the dual thickness. We also extend the result to any compact metric space using the Kuratowski embedding.

Then, in Section \ref{S3}, we use the techniques introduced by Hunt and Kaloshin along with some key new arguments and prove the following embedding theorem for compact subsets of Banach spaces with thickness exponent less than 1.

\begin{theorem}\label{ET5}
Let $X$ be a compact subset of a Banach space $\mathcal{B}$ with thickness exponent $\tau(X) < 1$ and box--counting dimension $\romd_{B}(X)< \infty$. Then for any integer $k > 2d_{B}(X)$ and any given $\theta$ with
\[ 0 < \theta < (1-\tau(X)) \frac{k -2d_{B}(X)}{k\left(1 + \tau(X)\right)},\]
$\mu$--almost every linear map $L \colon \mathcal{B} \to \mathbb{R}^{k}$ satisfies:
\begin{equation}\label{3.10}
\|x-y\| \leq C_{L}|Lx-Ly|^{\theta}, \, \, \, \forall \, \, \, x,y \in X, \,\, \mbox{for some}\,\, C_{L}>0.
\end{equation}
In particular, $L$ is bijective from $X$ onto $L(X)$ with a H\"older continuous inverse.
\end{theorem}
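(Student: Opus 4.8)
The plan is to show that the set of ``bad'' linear maps has $\mu$-measure zero by a Borel--Cantelli argument over a geometric sequence of scales, exactly as in the proofs of Theorems \ref{ET1} and \ref{ET2}, but with the anti-concentration estimate driven by the thickness through a direct finite-dimensional approximation of the difference vectors. First I would fix $\theta$ in the stated range and reduce the desired H\"older inequality to the family of events $B_{n}$ that there exist $x,y\in X$ with $\|x-y\|\ge 2^{-n}$ yet $|Lx-Ly|<2^{-n/\theta}$; since $L$ is (almost surely) bounded, a net of $X$ at scale $\sim 2^{-n/\theta}$, of cardinality $N(X,2^{-n/\theta})\le C2^{nd/\theta}$ for any $d>\romd_{B}(X)$, reduces $B_{n}$ to a statement about the $\le C2^{2nd/\theta}$ differences $p_{i}-p_{j}$ of net points, up to an error governed by the operator norm of $L$. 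If $\sum_{n}\mu(B_{n})<\infty$, then almost every $L$ lies in only finitely many $B_{n}$, which yields the claimed inequality after the usual compactness/finitely-many-scales cleanup.

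The heart of the matter is the per-pair estimate. For a fixed difference $z=x-y$ with $\|z\|\approx\delta=2^{-n}$, I would use the thickness to choose a subspace $V$ with $\romdist_{\mathcal B}(x,V)\le\epsilon$ for all $x\in X$ and $\dim V=d(X,\epsilon)\le C\epsilon^{-\tau'}$ for any $\tau'>\tau(X)$, and write $z=w+r$ with $w\in V$, $\|r\|\le 2\epsilon$ and $\|w\|\ge\delta-2\epsilon$. Taking the functionals underlying $\mu$ in the form $\phi=\sum_{j}a_{j}e_{j}^{*}$ built from an Auerbach system $(e_{j},e_{j}^{*})$ adapted to a nested family of such subspaces, I would split $\phi(z)=\phi(w)+\phi(r)$, bound the error by $|\phi(r)|\le\|\phi\|\,2\epsilon$, and obtain anti-concentration for the signal $\phi(w)$ from its largest coordinate: since $\|w\|\le\sum_{j}|w_{j}|\le(\dim V)\max_{j}|w_{j}|$, the spread of $\phi(w)$ is at least $\sim\|w\|/\dim V\sim\delta\,\epsilon^{\tau'}$. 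Combining the $k$ independent coordinates yields a bound of the shape $\mu(|Lz|<\rho)\le\bigl(C(\rho+\|\phi\|\epsilon)\,\epsilon^{-\tau'}/\delta\bigr)^{k}$.

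It then remains to optimise the free scale $\epsilon=\delta^{\beta}$ against the threshold $\rho\approx\delta^{1/\theta}$. For the signal to dominate the approximation error one needs $\|\phi\|\epsilon\lesssim\delta\epsilon^{\tau'}$, i.e.\ $\epsilon\lesssim\delta^{1/(1-\tau')}$; this is precisely where the hypothesis $\tau(X)<1$ is indispensable, and this constraint is what produces the factor $1-\tau(X)$, while the loss $\epsilon^{-\tau'}$ in the anti-concentration produces the $1+\tau(X)$ in the denominator. Balancing the two competing exponents and requiring the product (number of pairs)$\times$(per-pair probability) to be summable pins down the admissible range of $\theta$; letting $\tau'\downarrow\tau(X)$ and $d\downarrow\romd_{B}(X)$ should recover the stated bound.

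I expect the main obstacle to be the Banach-space bookkeeping of the operator norm. Unlike the Hilbert case there is no orthogonal projection onto $V$, and the biorthogonal functionals $e_{j}^{*}$ have norm $1$ but no better, so the single random map $L$ must be assembled from infinitely many coordinates with coefficients $a_{j}$ decaying fast enough that $\|\phi\|=\|\sum_{j}a_{j}e_{j}^{*}\|_{\mathcal B^{*}}$ is almost surely finite and uniformly controlled across all scales, while simultaneously the head of $\phi$ (the coordinates spanning $V$) retains enough mass to give the anti-concentration above. Reconciling these two demands---a uniformly bounded operator norm against a scale-by-scale non-degenerate action on the $\epsilon^{-\tau'}$-dimensional approximants---is the delicate point, and it is exactly the interplay between the decay rate of the $a_{j}$ and the growth rate $\epsilon^{-\tau'}$ of $\dim V$ that must be tuned so that the final exponent is no worse than the one claimed.
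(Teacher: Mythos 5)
Your architecture is essentially the paper's: a dyadic Borel--Cantelli argument over a covering of the difference set, thickness-generated subspaces $V$ with an Auerbach system whose biorthogonal functionals (extended by Hahn--Banach) span the dual subspaces on which $\mu$ is built, and the largest-coordinate bound $\max_j|w_j|\ge\|w\|/\dim V$ as the source of anti-concentration. (The ``delicate point'' you flag at the end is in fact handled trivially: the paper takes $L_i=\sum_n n^{-2}\phi_{i,n}$ with $\phi_{i,n}$ in the unit ball of the $n$-th dual subspace, so $\|L\|$ is uniformly bounded, and the weight only contributes a harmless polynomial factor $n^2$ to the probability bound.) The genuine gap is in your per-pair estimate: you inflate the threshold by the approximation error, bounding $\mu(|Lz|<\rho)$ by $\bigl(C(\rho+\|\phi\|\epsilon)\epsilon^{-\tau'}/\delta\bigr)^k$. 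Your own balancing forces $\epsilon\approx\delta^{1/(1-\tau')}$, and at exactly that scale the error $\epsilon$ is of the same order as the signal $\delta\epsilon^{\tau'}$ (the balance equalizes them) and strictly dominates $\rho=\delta^{1/\theta}$, since $\theta<1-\tau'$. Hence your per-coordinate bound is $\Theta(1)$ as written (and diverges like $\delta^{-\tau'/(1-\tau')}$ once the correct dimension factor below is inserted), the sum over scales cannot converge, and the optimization pins down an \emph{empty} range of $\theta$ rather than the stated one. The fix, which is what the paper does, is to never add the error to the threshold: apply the slab estimate (Lemma \ref{1.6}) directly to $z$ itself, using for the signal direction $|g(z)|\ge|g(w)|-\|r\|\ge\|w\|/\dim V-2\epsilon$, with the multiplicative constants in the approximation tuned (the paper takes $\romd(X,V_n)\le 2^{-\theta\beta n}/3$) so that the subtraction still leaves $|g(z)|\ge c\,\delta\epsilon^{\tau'}$; then only $\rho$ appears in the numerator.

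There is also a bookkeeping error that prevents your exponent count from reproducing the theorem even if the additive $\epsilon$ were removed. Lemma \ref{1.6} carries a factor $d_n$ \emph{in addition to} $1/|g(z)|$, and since $|g(z)|\ge c\,\|w\|/d_n$, the dimension $d_n\sim\epsilon^{-\tau'}$ enters \emph{squared}: the per-coordinate probability is of order $n^2\,d_n\,\rho/(\delta\epsilon^{\tau'})\sim n^2\rho\,\epsilon^{-2\tau'}/\delta$, not $\rho\,\epsilon^{-\tau'}/\delta$. With your single power, the balance $\beta=1/(1-\tau')$ would give summability for all $\theta<(1-\tau')(k-2d)/k$ --- a strictly larger range than claimed --- so your assertion that the single loss $\epsilon^{-\tau'}$ ``produces the $1+\tau(X)$ in the denominator'' does not check out numerically. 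It is the squared loss that does: with $1+2\beta\tau'=(1+\tau')/(1-\tau')$ the summability condition becomes $\theta<(1-\tau')\,\frac{k-2d}{k(1+\tau')}$, which recovers the stated range on letting $\tau'\downarrow\tau(X)$ and $d\downarrow\romd_{B}(X)$.
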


We note that when $\tau(X) = 0$, given any $0 < \theta <1$, $X$ admits finite--dimensional embeddings with a $\theta$-H\"older continuous inverse, exactly as in the previous two theorems.
We also note that the above result provides a bigger range for $\theta$ comparing to \eqref{Range}, whenever 
\[\frac{1-\tau(X)}{1 + \tau(X)} > \frac{1}{1 + 2\romd_{B}(X)} \Leftrightarrow \tau(X) < \frac{\romd_{B}(X)}{1 + \romd_{B}(X)}.\]
The restriction on the thickness here is surprising and it is an interesting open problem whether there is a result that extends Theorem \ref{ET5} for thickness $\tau \geq 1$. 

Finally, in Section \ref{S4} we look closely at the thickness and dual thickness and how they relate to the box--counting dimension. We prove some general estimates and we also look at a particular class of sequences in $\ell_{p}$ which was used by Pinto de Moura and Robinson \cite{DR} to prove that Theorem \ref{ET2} is asymptotically sharp as $k \rightarrow \infty$.

\section{Embeddings from Banach into \\Hilbert spaces}\label{S2}

In this section, we prove two embedding results into a Hilbert space. Both of them can be combined with Theorem \ref{ET1} to provide an embedding theorem for compact subsets of Banach spaces into Euclidean spaces that does not require the arguments in Robinson's result \cite{Rob}. Before we embark on the proofs of the embedding theorems, we recall some elementary but useful properties of the box--counting dimension, which we will be using in what follows. The proofs can be found in Robinson \cite{JCR}.

\begin{lemma}\label{2}\hfill
Suppose $(E_{1}, \| \cdot \|_{1})$ and $(E_{2}, \| \cdot \|_{2})$ are normed spaces.
\begin{enumerate}
\item If $f \colon (E_{1}, \|\cdot \|_{1}) \to (E_{2}, \|\cdot \|_{2})$ is a Lipschitz function, i.e. there exists a constant $C >0$, such that
\[\|f(x_{1}) - f(x_{2})\|_{2} \leq C \|x_{1} - x_{2}\|_{1} \,\,\, \text{for all} \, \, \, x_{1}, x_{2} \in E_{1},\] 
then for all compact subsets $X \subset E_{1}$ we have
\[\romd_{B}(f(X)) \leq \romd_{B}(X).\]
\item Let $E_{1} \times E_{2}$ be the product space equipped with some product metric. Suppose also that $X\subseteq E_{1}$ and $Y \subseteq E_{2}$ are compact. Then,
\[ \romd_{B}(X \times Y) \leq \romd_{B}(X) + \romd_{B}(Y).\]
\end{enumerate}
\end{lemma}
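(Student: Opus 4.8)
The plan is to prove both parts directly from the characterisation of the box--counting dimension in terms of covering numbers given in \eqref{BC3}, using the elementary fact that Lipschitz maps and products behave predictably with respect to covers by balls.

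For part (1), I would start from a cover of $X$ by $N(X,\epsilon)$ balls $B(x_{i},\epsilon)$ with centres $x_{i}\in X$. Since $f$ is Lipschitz with constant $C$, each such ball satisfies $f\bigl(B(x_{i},\epsilon)\cap X\bigr)\subseteq B\bigl(f(x_{i}),C\epsilon\bigr)$, and the centres $f(x_{i})$ lie in $f(X)$. Hence these images form an admissible cover of $f(X)$, giving $N\bigl(f(X),C\epsilon\bigr)\le N(X,\epsilon)$. Substituting $\delta=C\epsilon$ into \eqref{BC3} and passing to the $\limsup$ as $\epsilon\to 0$, the additive constant in the denominator $-\log(C\epsilon)=-\log C-\log\epsilon$ is negligible against $-\log\epsilon$, so that $\romd_{B}(f(X))\le\romd_{B}(X)$.

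For part (2), I would first fix a convenient product metric, say the maximum metric $\|(x,y)\|=\max\{\|x\|_{1},\|y\|_{2}\}$; since all product metrics are Lipschitz equivalent, part (1) applied to the (bi-Lipschitz) identity map shows the resulting dimension is independent of this choice. With the maximum metric a ball of radius $\epsilon$ in $E_{1}\times E_{2}$ is exactly the product of the corresponding $\epsilon$-balls, so the products of an $\epsilon$-cover of $X$ and an $\epsilon$-cover of $Y$ yield a cover of $X\times Y$ with centres in $X\times Y$. This gives $N(X\times Y,\epsilon)\le N(X,\epsilon)\,N(Y,\epsilon)$, whence $\log N(X\times Y,\epsilon)\le\log N(X,\epsilon)+\log N(Y,\epsilon)$. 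Dividing by $-\log\epsilon$ and using the subadditivity of the $\limsup$ then yields $\romd_{B}(X\times Y)\le\romd_{B}(X)+\romd_{B}(Y)$.

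Neither estimate presents a serious obstacle; the only points requiring care are bookkeeping ones. In part (1) one must confirm that the covering balls of $f(X)$ may legitimately be taken with centres in $f(X)$, as the definition demands, which is immediate here, and that the constant $C$ does not affect the limit. In part (2) the mild subtlety is the dependence on the product metric, which is dispatched by the equivalence of product metrics together with part (1); working with the maximum metric keeps the covering argument exact rather than merely up to multiplicative constants.
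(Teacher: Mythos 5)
Your proof is correct, and since the paper omits the argument entirely (deferring to Robinson \cite{JCR}), there is nothing it deviates from: the covering-number estimates $N(f(X),C\epsilon)\le N(X,\epsilon)$ and $N(X\times Y,\epsilon)\le N(X,\epsilon)\,N(Y,\epsilon)$ are exactly the standard proofs given in that reference. Your bookkeeping points --- centres lying in $f(X)$, the constant $C$ washing out in the $\limsup$, and reduction to the maximum metric via bi-Lipschitz equivalence of (the standard) product metrics --- are precisely the right ones.
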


\subsection{Embedding when $\romd_{B}(X)$ is finite}

We first show that any compact subset of a Banach space with finite box--counting dimension embeds into a Hilbert space. 

\begin{proposition}\label{ETH1}
Suppose that $X$ is a compact subset of a Banach space $\mathfrak{B}$ with finite box-counting dimension. Then, for every $\alpha > 1+ \romd_{B}(X)$ there exists a linear map $\Phi \colon \mathfrak{B} \to H$, where $H$ is a separable Hilbert space, such that for every $x,y \in X$
\[C_{\alpha}^{-1} \|x-y\|^{\alpha} \leq \left|\Phi(x) - \Phi(y)\right| \leq C_{\alpha}\|x-y\|,  \, \mbox{for some} \,\, C_{\alpha}>0.\]
\end{proposition}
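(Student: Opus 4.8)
The plan is to construct $\Phi$ explicitly as a weighted sequence of Hahn--Banach functionals adapted to the covers of $X$ at dyadic scales, mapping $\mathfrak{B}$ into $\ell_{2}$. Fix $d$ with $\romd_{B}(X) < d < \alpha - 1$, which is possible precisely because $\alpha > 1 + \romd_{B}(X)$. For each $j$ larger than some base scale $j_{0}$, set $\epsilon_{j} = 2^{-j}$ and cover $X$ by $N_{j} \le C\epsilon_{j}^{-d}$ balls of radius $\epsilon_{j}$ with centres $\{z_{i}^{(j)}\}_{i=1}^{N_{j}} \subset X$. For each ordered pair of indices $(i,i')$, I would use the Hahn--Banach theorem to pick a functional $\phi_{i,i'}^{(j)} \in \mathfrak{B}^{*}$ with $\|\phi_{i,i'}^{(j)}\| \le 1$ and $\phi_{i,i'}^{(j)}(z_{i}^{(j)} - z_{i'}^{(j)}) = \|z_{i}^{(j)} - z_{i'}^{(j)}\|$. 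With weights $c_{j} = \epsilon_{j}^{\alpha - 1}$, define
\[ \Phi(z) = \bigl( c_{j}\,\phi_{i,i'}^{(j)}(z) \bigr)_{j,\,i,\,i'}. \]
Since the index set is a countable union of finite sets, $\Phi$ is a well-defined linear map into the separable Hilbert space $\ell_{2}$, provided the coefficients are square-summable.

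The upper (Lipschitz) bound is immediate from the choice of normalised functionals:
\[ |\Phi(z)|^{2} = \sum_{j,i,i'} c_{j}^{2}\,|\phi_{i,i'}^{(j)}(z)|^{2} \le \Bigl( \sum_{j} N_{j}^{2} c_{j}^{2} \Bigr) \|z\|^{2}. \]
The series $\sum_{j} N_{j}^{2} c_{j}^{2} \le C^{2}\sum_{j} 2^{2j(d - \alpha + 1)}$ converges exactly because $d < \alpha - 1$; this simultaneously shows that $\Phi$ lands in $\ell_{2}$ and gives $|\Phi(z)| \le C_{\alpha}\|z\|$ with $C_{\alpha}^{2} = \sum_{j} N_{j}^{2} c_{j}^{2}$.

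For the lower bound, given $x,y \in X$ I would select the scale $m$ with $\epsilon_{m} \le \tfrac{1}{8}\|x-y\| < 2\epsilon_{m}$, and let $z_{i}^{(m)}, z_{i'}^{(m)}$ be centres whose balls of radius $\epsilon_{m}$ contain $x$ and $y$ respectively. Writing $\phi_{i,i'}^{(m)}(x-y) = \|z_{i}^{(m)} - z_{i'}^{(m)}\| + \phi_{i,i'}^{(m)}\bigl((x - z_{i}^{(m)}) - (y - z_{i'}^{(m)})\bigr)$ and estimating the error term by $2\epsilon_{m}$, the triangle inequality gives $\phi_{i,i'}^{(m)}(x-y) \ge \|x-y\| - 4\epsilon_{m} \ge \tfrac{1}{2}\|x-y\|$. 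Keeping only this single coordinate of $\Phi(x-y)$ and using $\epsilon_{m} \ge \tfrac{1}{16}\|x-y\|$ to bound $c_{m} = \epsilon_{m}^{\alpha-1}$ from below yields $|\Phi(x-y)| \ge C_{\alpha}^{-1}\|x-y\|^{\alpha}$, as required.

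The main obstacle I anticipate is balancing the two competing requirements through the weights $c_{j}$: the exponent $\alpha - 1$ must be large enough that $\sum_{j} N_{j}^{2} c_{j}^{2}$ converges (controlling the embedding into $\ell_{2}$ and the Lipschitz bound), yet small enough that $c_{m} = \epsilon_{m}^{\alpha-1}$ does not over-penalise the separating coordinate in the lower bound. The window in which both hold is exactly $\alpha > 1 + \romd_{B}(X)$, and the need to separate \emph{all} pairs of centres at each scale is what produces the factor $N_{j}^{2}$ rather than $N_{j}$. Some routine care is also needed to fix the base scale $j_{0}$ so that the chosen $m$ is always admissible (using that $X$ is bounded, so $\|x-y\|$ is bounded above) and to handle pairs of coincident centres, where the corresponding functional may be taken to be zero.
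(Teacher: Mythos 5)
Your proposal is correct and takes essentially the same route as the paper: Hahn--Banach norming functionals attached to dyadic-scale covers, weighted by $\epsilon_j^{\alpha-1}$ and assembled into a Hilbert-space direct sum, with the Lipschitz bound coming from summability (which forces $\alpha > 1 + d$) and the lower bound from isolating a single scale's coordinate. The only cosmetic difference is that the paper covers the difference set $X-X$ directly (via $\romd_{B}(X-X)\leq 2\romd_{B}(X)$, giving $\sim 2^{2nd}$ functionals per scale), whereas you cover $X$ and norm all pairwise differences of centres, which yields the same $N_j^{2}\sim 2^{2jd}$ count, the same admissible range of $\alpha$, and a cover of $X-X$ in disguise.
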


We first prove that for every $n \in \mathbb{N}$ there exists a linear embedding $\phi_{n}$ into an Euclidean space $\mathbb{R}^{m_{n}}$ such that $\phi_{n}^{-1}$ satisfies a Lipschitz condition for all $x,y \in X$ with $\|x-y\| \geq 2^{-n}$. 

\begin{lemma}\label{1EL}
Suppose that $X$ is as above. Then, given $d > \romd_{B}(X)$ and $n \in \mathbb{N}$, there exist $\phi_{n} \in \mathcal{L}\left(\mathcal{B} ; \mathbb{R}^{m_{n}}\right)$, where $m_{n} \leq C 2^{2n d}$, such that $\|\phi_{n}\| \leq \sqrt{m_{n}}$ and 
\[\left| \phi_{n}(x-y)\right| \geq 2^{-(n+1)} \, \, \mbox{whenever} \,\,  \|x-y\| \geq 2^{-n}, \,\mbox{for x,y  in} \, X.\]
\end{lemma}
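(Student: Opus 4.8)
The plan is to construct $\phi_n$ by taking a finite net of $X$ at a scale comparable to $2^{-n}$ and, for each pair of net points, recording a single Hahn--Banach functional that sees their separation; stacking all of these functionals as the coordinates of a map into $\mathbb{R}^{m_n}$ then furnishes both the norm bound and the lower bound on $|\phi_n(x-y)|$. The substitute for the orthogonal-projection argument available in a Hilbert space is precisely the Hahn--Banach theorem, which supplies norm-one functionals attaining the distance between two points; this is the one non-routine ingredient, and the remaining work is choosing the net scale correctly and doing triangle-inequality bookkeeping.

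First I would fix the net scale $r = 2^{-(n+3)}$. Since $d > \romd_B(X)$, we have $N(X,r) \le C r^{-d}$ for all small $r$, so there is an $r$-net $\{x_1,\dots,x_M\} \subset X$ (recall the centres in Definition~\ref{BC2} lie in $X$) with
\[ M = N(X,r) \le C\,2^{(n+3)d} = C'\,2^{nd}, \]
and every point of $X$ lies within $r$ of some $x_i$. For each ordered pair $(i,j)$ with $i \neq j$, the Hahn--Banach theorem yields a functional $\psi_{ij} \in \mathcal{B}^{*}$ with $\|\psi_{ij}\| = 1$ and $\psi_{ij}(x_i - x_j) = \|x_i - x_j\|$. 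I then define $\phi_n \colon \mathcal{B} \to \mathbb{R}^{m_n}$ by taking the $\psi_{ij}$ as its coordinates, so that $m_n = M(M-1) \le M^2 \le C''\,2^{2nd}$, as required.

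The norm bound is immediate from this construction: using the Euclidean norm on $\mathbb{R}^{m_n}$ together with $\|\psi_{ij}\| = 1$, for every $v \in \mathcal{B}$ we have
\[ |\phi_n(v)|^2 = \sum_{i \neq j} |\psi_{ij}(v)|^2 \le m_n\,\|v\|^2, \]
so that $\|\phi_n\| \le \sqrt{m_n}$.

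Finally I would verify the separation property, which is where the choice $r = 2^{-(n+3)}$ pays off. Given $x,y \in X$ with $\|x-y\| \ge 2^{-n}$, choose net points $x_i,x_j$ with $\|x-x_i\| \le r$ and $\|y-x_j\| \le r$. The triangle inequality gives
\[ \|x_i - x_j\| \ge \|x-y\| - \|x-x_i\| - \|y-x_j\| \ge 2^{-n} - 2r = \tfrac{3}{4}\,2^{-n} > 0, \]
so in particular $i \neq j$ and $\psi_{ij}$ is defined. Then, since $\|\psi_{ij}\| = 1$,
\[ |\phi_n(x-y)| \ge |\psi_{ij}(x-y)| \ge \|x_i - x_j\| - \|x-x_i\| - \|y-x_j\| \ge \tfrac{3}{4}\,2^{-n} - 2r = 2^{-(n+1)}, \]
which is exactly the claimed bound. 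The only point requiring care is calibrating $r$ (here $2^{-(n+3)}$) so that the two $2r$ losses incurred in the triangle inequalities above still leave the margin $2^{-(n+1)}$; everything else follows mechanically from Hahn--Banach and the box-counting estimate.
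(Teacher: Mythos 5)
Your proof is correct: the choice of net scale $2^{-(n+3)}$, the Hahn--Banach functionals norming differences of net points, the pair count $m_{n}=M(M-1)\leq C2^{2nd}$, the bound $\|\phi_{n}\|\leq\sqrt{m_{n}}$, and the two triangle-inequality steps (including the check that $i\neq j$ so that $\psi_{ij}$ is defined) all hold, and the margins work out to exactly $2^{-(n+1)}$. Structurally, though, you decompose differently from the paper: there, one works directly with the difference set $Z=X-X$, covering $Z$ by $m_{n}=N(Z,2^{-(n+2)})\leq C2^{2nd}$ balls with centres $z_{i}$ --- which requires the preliminary fact $\romd_{B}(X-X)\leq 2\romd_{B}(X)$, obtained from Lemma \ref{2} via the Lipschitz map $(x,y)\mapsto x-y$ on $X\times X$ --- and then takes a single norming functional $f_{i}$ per centre, so that for $z\in Z$ with $\|z\|\geq 2^{-n}$ the nearest centre gives $|f_{i}(z)|\geq\|z\|-2\|z-z_{i}\|\geq 2^{-(n+1)}$. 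You instead take an $r$-net of $X$ itself with $M\leq C2^{nd}$ points and square the count by assigning one functional to each ordered pair of net points. What your route buys is self-containedness: you need only the covering estimate for $X$ and never invoke the difference-set dimension lemma. What the paper's route buys is economy: one functional per ball of the $Z$-cover rather than one per pair, and slightly lighter bookkeeping in the separation step (a single approximation $\|z-z_{i}\|\leq 2^{-(n+2)}$ rather than two approximations at the endpoints). Quantitatively the two are equivalent, since both yield $m_{n}\sim 2^{2nd}$, which is what feeds into the exponent in Proposition \ref{ETH1}; as a minor remark, using unordered pairs (taking $\psi_{ji}=-\psi_{ij}$) would halve your count, though this is immaterial for the lemma.
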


\begin{proof}
Let $Z =X-X = \{x-y:x,y \in X\}.$ Then, it is easy to see that $\romd_{B}(Z) \leq 2\romd_{B}(X).$ 
Indeed, $Z$ is the image of $X \times X$ under the Lipschitz map 
\[(x,y) \mapsto x-y\] and so by Lemma \ref{2}, we obtain
\[\romd_{B}(f(X \times X)) = \romd_{B}(X-X) \leq \romd_{B}(X \times X) \leq 2\romd_{B}(X).\]

Given $d$ as in the statement of the lemma, we can cover $Z$ by no more than $m_{n} = N(Z, 2^{-(n+2)}) \leq C_{d} 2^{2nd}$ balls of radius $2^{-(n+2)}$.
Let the centres of these balls be $z_{i}$. 
By the Hahn--Banach Theorem, we can find $f_{i} \in \mathcal{B}^{*}$ such that $\|f_{i}\| = 1$ and $f_{i}(z_{i}) = \|z_{i}\|$. 
Now, define $\phi_{n} \colon \mathcal{B} \to \mathbb{R}^{m_{n}}$ as
\[\phi_{n}(x) = \left(f_{1}(x),...,f_{m_{n}}(x)\right).\]
It is immediate that $\|\phi_{n}\| \leq \sqrt{m_{n}}$.

Suppose now that $z \in X-X$ such that $\|z\| \geq 2^{-n}$. Then, there exists some $i \leq m_{n}$ such that $\|z-z_{i}\| \leq 2^{-(n+2)}$, and therefore
\begin{align*}
\left|\phi_{n}(z)\right| & \geq |f_{i}(z)| = |f_{i}(z_{i}) + f_{i}(z-z_{i})|\\
& \geq \|z_{i}\| - \|z-z_{i}\| \geq \|z\| -2\|z-z_{i}\|\\
& \geq 2^{-n} - 2^{-(n+1)} \geq 2^{-(n+1)}.\qedhere
\end{align*}
\end{proof}
We now continue with the proof of Proposition \ref{ETH1}.

\begin{proof}[Proof of Proposition \ref{ETH1}]
We first construct a new separable Hilbert space $H$ given an orthonormal basis $(e_i)_{i=1}^\infty$ of $\ell^2$ and a sequence $(m_i)_{i=1}^\infty$ of positive integers,
by taking the collection
\[
e_i\otimes w^{m_i}_j\qquad i\in\N,\ j=1,\ldots,m_i,
\]
as an orthonormal basis for $H$, where $(w_{j}^N)_{j=1}^N$ denotes an orthonormal basis for $\R^N$. We define the inner product $\<\cdot,\cdot\>$ on $H$ to ensure that this is indeed an orthonormal set, i.e.\ we set
\[
\<e_i\otimes w^{m_i}_j,e_{i'}\otimes w^{m_{i'}}_{j'}\>=\delta_{ii'}\delta_{jj'}.
\]
In particular if $x_i\in\R^{m_i}$, $i\in\N$, then
\[
\left\|\sum_{i=1}^\infty e_i\otimes x_i\right\|_H^2=\sum_{i=1}^\infty\|x_i\|_{\R^{m_i}}^2.
\]

Take $d = d_{\alpha}>0$ such that $\alpha >1+d> 1 + \romd_{B}(X)$. Then, for this $d > \romd_{B}(X)$, we consider $\phi_{n}, m_{n}$ given by Lemma \ref{1EL}, and then from the above construction we consider $H$ based on the sequence $(m_{n})_{n=1}^{\infty}$. 
Now, for $x \in \mathfrak{B}$ we set
\[\Phi(x) = \sum_{n=1}^{\infty} 2^{(1-\alpha) n} \phi_{n}(x) \otimes e_{n} \in H.\]
Clearly $\Phi$ is linear and
\[\|\Phi\|^{2} \leq \sum_{n=1}^{\infty} 2^{2(1-\alpha) n} \|\phi_{n}\|^{2} \leq \sum_{n=1}^{\infty} 2^{2(1+d-\alpha)n} < \infty,\]
since $1+d-\alpha <0.$

Now, take any $x,y \in X$ and suppose $x \neq y$ (the case $x=y$ is trivial).
If $\|x-y\| \geq 1$, then it suffices to take $R >0$ such that 
\[X-X \subset B(0,R).\]
Therefore, using also that $\|\phi_{1}(x-y)\| \geq \frac{1}{4}$, we have that
\[\|\Phi(x-y)\| \geq 2^{1-\alpha} \|\phi_{1}(x-y)\| \geq \frac{2^{1-\alpha}}{4} \left(\frac{\|z\|}{R}\right)^{\alpha} = C_{\alpha} \|z\|^{\alpha}.\]

If $0 < \|x-y\| < 1$, consider $n$ such that 
\[2^{-n} \leq \|x-y\| < 2^{-(n-1)}.\]
Thus, we obtain
\begin{align*}
\|\Phi(x-y)\| & \geq 2^{(1-\alpha)n} |\phi_{n}(x-y)|\\
& \geq 2^{(1-\alpha)n} 2^{-n-1} \geq C_{\alpha} \, 2^{-\alpha n+1}\\
& \geq C_{\alpha} \|x-y\|^{\alpha}.\qedhere
\end{align*}
\end{proof}
By combining Proposition \ref{ETH1} and Theorem \ref{ET1}, we can now obtain an embedding theorem for compact subsets of Banach spaces into finite--dimensional spaces. The difference here is that the range of the exponent depends on the thickness and the box--counting dimension rather than the dual thickness.

\begin{theorem}\label{ET3}
Let $X$ be a compact subset of a Banach space $\mathfrak{B}$ with thickness exponent $\tau(X)$ and box--counting dimension $\romd_{B}(X)$. Then for any integer $k > 2d_{B}(X)$ and any given $\theta$ with
\[0 < \theta < \frac{k -2d_{B}(X)}{k\left(1 + \romd_{B}(X)\right)\left(1 + \frac{\tau(X)}{2}\right)},\]
there exists a linear map $L \colon \mathfrak{B} \to \mathbb{R}^{k}$ such that
\begin{equation}\label{3.10}
\|x-y\| \leq C_{L}|Lx-Ly|^{\theta}, \, \, \forall \, \, \, x,y \in X.
\end{equation}
In particular, $L$ is bijective from $X$ onto $L(X)$ with a H\"older continuous inverse.
\end{theorem}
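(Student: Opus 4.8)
The plan is to produce $L$ as a composition $L=L'\circ\Phi$, where $\Phi\colon\mathfrak{B}\to H$ is the bi-H\"older embedding into a separable Hilbert space supplied by Proposition \ref{ETH1}, and $L'\colon H\to\mathbb{R}^{k}$ is a Hunt--Kaloshin map obtained by applying Theorem \ref{ET1} to the image $Y:=\Phi(X)\subset H$. Both $\Phi$ and $L'$ are linear, so $L$ is linear; and since $\Phi$ is a bounded linear map and $X$ is compact, $Y$ is a compact subset of $H$, so Theorem \ref{ET1} genuinely applies to it. The whole argument then reduces to checking that the two H\"older exponents compose to give precisely the stated range for $\theta$.

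First I would record how the two invariants of $Y$ relate to those of $X$. The upper bound in Proposition \ref{ETH1} makes $\Phi$ Lipschitz, so Lemma \ref{2}(1) gives $\romd_{B}(Y)\le\romd_{B}(X)$; in particular the hypothesis $k>2\romd_{B}(X)$ forces $k>2\romd_{B}(Y)$, so $k$ is admissible for $Y$. I would then show $\tau(Y,H)\le\tau(X,\mathfrak{B})$: if $V\subseteq\mathfrak{B}$ is a subspace of dimension $d(X,\delta)$ with $\romdist_{\mathfrak{B}}(x,V)\le\delta$ for all $x\in X$, then $\Phi(V)$ is a subspace of $H$ of dimension at most $d(X,\delta)$, and the Lipschitz bound gives $\romdist_{H}(\Phi x,\Phi V)\le C_{\alpha}\romdist_{\mathfrak{B}}(x,V)\le C_{\alpha}\delta$ for every $x\in X$. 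Hence $d(Y,C_{\alpha}\delta)\le d(X,\delta)$, and since the multiplicative constant $C_{\alpha}$ is harmless in the limit defining the thickness, taking $\limsup$ yields $\tau(Y)\le\tau(X)$.

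With these bounds in hand I would fix $\theta$ in the stated range and choose the exponent $\alpha$. Writing $B:=\dfrac{k-2\romd_{B}(X)}{k\left(1+\tau(X)/2\right)}$, the hypothesis is exactly $\theta(1+\romd_{B}(X))<B$, so by continuity I can pick $\alpha>1+\romd_{B}(X)$ close enough to $1+\romd_{B}(X)$ that $\alpha\theta<B$ still holds, apply Proposition \ref{ETH1} with this $\alpha$ to obtain $\Phi$, and set $\theta':=\alpha\theta$. Using the transfers from the previous paragraph,
\[
\theta'=\alpha\theta< B=\frac{k-2\romd_{B}(X)}{k\left(1+\tau(X)/2\right)}\le\frac{k-2\romd_{B}(Y)}{k\left(1+\tau(Y)/2\right)},
\]
so $\theta'$ lies in the admissible range of Theorem \ref{ET1} for $Y$, producing a linear $L'\colon H\to\mathbb{R}^{k}$ with $|u-v|\le C_{L'}|L'u-L'v|^{\theta'}$ for all $u,v\in Y$. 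Composing with the lower bound of Proposition \ref{ETH1}, for $x,y\in X$ and $L=L'\circ\Phi$,
\begin{align*}
\|x-y\| &\le C_{\alpha}^{1/\alpha}\,|\Phi x-\Phi y|^{1/\alpha}
\le C_{\alpha}^{1/\alpha}\bigl(C_{L'}|Lx-Ly|^{\theta'}\bigr)^{1/\alpha}\\
&= C_{L}\,|Lx-Ly|^{\theta'/\alpha}=C_{L}\,|Lx-Ly|^{\theta},
\end{align*}
which is the required estimate. Injectivity of $\Phi$ on $X$ (from the lower bound) and of $L'$ on $Y$ shows that $L$ is a bijection of $X$ onto $L(X)$, and the displayed inequality makes its inverse H\"older.

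I expect the only genuinely delicate point to be the exponent bookkeeping. The factor $1/(1+\romd_{B}(X))$ in the bound on $\theta$ is precisely the best H\"older exponent $1/\alpha$ attainable for $\Phi^{-1}$ as $\alpha\downarrow 1+\romd_{B}(X)$, so one must check that decreasing $\alpha$ to this value, while keeping $\theta'=\alpha\theta$ inside the Hunt--Kaloshin window, reproduces the claimed open range and that the strict inequality $\alpha>1+\romd_{B}(X)$ demanded by Proposition \ref{ETH1} does not shrink it. By contrast, the dimension and thickness transfers are routine once the Lipschitz bound is used, and tracking the multiplicative constants through the composition is purely mechanical.
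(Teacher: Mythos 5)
Your proposal is correct and takes essentially the same route as the paper: the paper likewise factors $L = T\circ\Phi$ with $\Phi$ from Proposition \ref{ETH1}, choosing $\theta_{1} < 1/(1+\romd_{B}(X))$ and $\theta_{2} = \theta/\theta_{1}$ (your $\alpha = \theta_{1}^{-1}$ and $\theta' = \alpha\theta$), and verifies the same transfers $\romd_{B}(\Phi(X)) \leq \romd_{B}(X)$ and $\tau(\Phi(X)) \leq \tau(X)$ before applying the Hilbert-space embedding theorem to $\Phi(X)$. One small point in your favour: the paper's proof cites Theorem \ref{ET2} at that final step, but since $T$ maps the Hilbert space $H$ into $\mathbb{R}^{k}$ and the exponent bound involves $1+\tau/2$, the result actually invoked is Hunt--Kaloshin's Theorem \ref{ET1}, exactly as you use it.
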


\begin{proof}
Take $\theta_{1}$ such that
\[\frac{\theta k\left(1 + \frac{\tau(X)}{2}\right)}{k -2d_{B}(X)}  < \theta_{1} <  \frac{1}{1 + \romd_{B}(X)},\]
and set \[\theta_{2} = \frac{\theta}{\theta_{1}} < \frac{k -2d_{B}(X)}{k\left(1 + \frac{\tau(X)}{2}\right)}.\]

By Proposition \ref{ETH1} (substituting $\alpha = \theta_{1}^{-1}$), there exists a separable Hilbert space $H$ and a linear map $\Phi \colon \mathfrak{B} \to H$ such that for every $x,y \in X$
\[C_{\theta_{1}}^{-1} \|x-y\| \leq \left|\Phi(x) - \Phi(y)\right|^{\theta_{1}},  \, \mbox{for some} \,\, C_{\theta_{1}}>0.\]

We know from Lemma \ref{2} that the box--counting dimension of $X$ does not increase under $\Phi$. 
We now check that the same holds for the thickness exponent of $X$. 
Take $\epsilon >0$ and let $V$ be the linear subspace of $\mathfrak{B}$ with the smallest dimension among all those that satisfy 
\[\dist(x,V) < \epsilon,\] for all $x \in X$.
Let $y = \Phi(x) \in \Phi(X)$ and if we let $v \in V$ such that
\[\|v - x\| < \epsilon,\] then 
\[\|y-\Phi(v)\| \leq \|\Phi\| \epsilon.\]
Since $\Phi(V)$ is a linear subspace of $H$ and $\dim(\Phi(V)) \leq \dim(V)$, we have
\begin{align*}
\tau(\Phi(x)) & = \limsup_{\epsilon \rightarrow 0} \frac{\log d_{H}\left(\Phi(X), \|\Phi\| \epsilon\right)}{-\log \|\Phi\| \epsilon}\\
& \leq  \limsup_{\epsilon \rightarrow 0} \frac{\log d_{\mathfrak{B}}\left(X, \epsilon\right)}{-\log \|\Phi\| \epsilon} = \tau(X).
\end{align*}
Since \[\theta_{2} = \frac{\theta}{\theta_{1}} < \frac{k -2d_{B}(X)}{k\left(1 + \frac{\tau(X)}{2}\right)} \leq \frac{k -2d_{B}(X)}{k\left(1 + \frac{\tau(\Phi(X))}{2}\right)},\] by Theorem \ref{ET2} there exists a linear map $T \colon H \to \mathbb{R}^{k}$ and a positive constant $C_{\theta}$ such that
\[\|x-y\| \leq C_{\theta} |T(\Phi(x)) - T(\Phi(y))|^{\theta_{1}\theta_{2}},\]
for all $x,y \in X$.
We conclude the proof by setting $L= T \circ \Phi$.
\end{proof}

Note that the above theorem gives an embedding for all compact subsets of Banach spaces with finite box--counting dimension, since $\tau(X) \leq \romd_{B}(X)$. The result improves on the range of $\theta$ from Theorem \ref{ET2} (see \eqref{Range}) whenever
\[\tau(X) < \frac{2\romd_{B}(X)}{1 + \romd_{B}(X)}.\]
However, we note that when $\tau(X) = 0$, we obtain $\theta$-H\"older embeddings for any 
\[0 < \theta < \frac{1}{1 + \romd_{B}(X)},\]
which is not optimal.

The above embedding into a Banach space can also be used as a tool to prove an embedding theorem for compact metric spaces with finite box--counting dimension. We first recall the Kuratowski embedding theorem, which allows us to isometrically embed any compact metric space into a Banach space.

\begin{lemma}[Kuratowski Embedding]\label{ET100}
If $(X,d)$ is any compact metric space, then the map
\[x \mapsto \Phi (x) = d(\cdot,x)\]
is an isometry of $(X,d)$ onto a subset of $L^{\infty}(X)$.
\end{lemma}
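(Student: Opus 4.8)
The plan is to verify directly that $\Phi$ preserves distances; since a distance-preserving map is automatically injective, this simultaneously shows that $\Phi$ is a bijection onto its image. Throughout I read $L^{\infty}(X)$ as the space of bounded real-valued functions on $X$ with the supremum norm $\|f\|_{\infty} = \sup_{z \in X}|f(z)|$. The first point to check is that $\Phi$ really maps into this space: for fixed $x \in X$ the function $z \mapsto d(z,x)$ is bounded by $\sup_{z,w \in X} d(z,w)$, which is finite because $X$ is compact, so $\Phi(x) \in L^{\infty}(X)$. This boundedness is the only place compactness is used, and it is the reason we may take the map $d(\cdot,x)$ itself rather than the translated map $d(\cdot,x) - d(\cdot,x_{0})$ that would be needed for an unbounded space.

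Next I would compute, for $x,y \in X$,
\[\|\Phi(x) - \Phi(y)\|_{\infty} = \sup_{z \in X}\bigl|d(z,x) - d(z,y)\bigr|\]
and bound the right-hand side from above and below. For the upper bound, the triangle inequality gives $d(z,x) \le d(z,y) + d(y,x)$ and $d(z,y) \le d(z,x) + d(x,y)$, so $|d(z,x) - d(z,y)| \le d(x,y)$ for every $z \in X$, whence the supremum is at most $d(x,y)$.

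For the matching lower bound I would simply evaluate the expression inside the supremum at the point $z = y$, which gives $|d(y,x) - d(y,y)| = d(x,y)$, so the supremum is in fact attained and equals $d(x,y)$. Combining the two bounds yields $\|\Phi(x) - \Phi(y)\|_{\infty} = d(x,y)$ for all $x,y \in X$, so $\Phi$ is an isometry onto $\Phi(X) \subset L^{\infty}(X)$. I do not expect a genuine obstacle in this argument; the only step requiring a moment's thought is confirming that the images are bounded functions, which is precisely where the compactness hypothesis enters.
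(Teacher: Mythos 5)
Your proof is correct and is the standard argument: the paper itself does not prove this lemma but defers to Heinonen \cite{HJ}, whose proof is exactly this computation (triangle inequality for the upper bound, evaluation at $z=y$ for the lower bound, with compactness guaranteeing boundedness of each $d(\cdot,x)$). Nothing further is needed.
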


For a proof of the above result, see Heinonen \cite{HJ}.
Hence, we have the following corollary of Theorem \ref{ET3} and Lemma \ref{ET100}, using the fact that
\[\tau(\Phi(X)) \leq \romd_{B}(\Phi(X)) = \romd_{B}(X),\]
where $\Phi$ is the isometry from Lemma \ref{ET100} and $X$ is an arbitrary compact metric space.

\begin{corollary}
Suppose $(X,d)$ is a compact metric space with finite box--counting dimension. Then, for any $k > 2\romd_{B}(X)$ and any given $\theta$ with
\[ 0 < \theta < \frac{k -2d_{B}(X)}{k\left(1 + \romd_{B}(X)\right)\left(1 + \frac{\romd_{B}(X)}{2}\right)},\]
there exists a Lipschitz map $\psi \colon (X,d) \to \mathbb{R}^{k}$ such that
\[\|x-y\| \leq C_{\psi}|\psi(x)-\psi(y)|^{\theta}, \, \, \forall \, \, \, x,y \in X.\]
\end{corollary}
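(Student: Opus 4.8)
The plan is to transfer the problem from the abstract metric space $(X,d)$ to a concrete compact subset of a Banach space, where Theorem~\ref{ET3} already applies, and then to compose the two maps. Concretely, I would use the Kuratowski embedding $\Phi$ of Lemma~\ref{ET100} to realise $(X,d)$ as an isometric copy $\Phi(X)$ inside the Banach space $L^{\infty}(X)$, apply Theorem~\ref{ET3} to $\Phi(X)$, and then take $\psi = L \circ \Phi$, where $L$ is the linear map produced by that theorem.

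First I would record that $\Phi \colon x \mapsto d(\cdot,x)$ is an isometry, so that $\|\Phi(x)-\Phi(y)\|_{\infty} = d(x,y)$ for all $x,y \in X$ and $\Phi(X)$ is a compact subset of $L^{\infty}(X)$. Because $\Phi$ is an isometry, both $\Phi$ and its inverse on $\Phi(X)$ are $1$-Lipschitz, so applying Lemma~\ref{2}(1) in each direction gives the \emph{exact} equality $\romd_{B}(\Phi(X)) = \romd_{B}(X)$; in particular $\romd_{B}(\Phi(X))$ is finite. Since the thickness exponent is always dominated by the box--counting dimension, this also yields $\tau(\Phi(X)) \leq \romd_{B}(\Phi(X)) = \romd_{B}(X)$.

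Next I would apply Theorem~\ref{ET3} to $\Phi(X) \subset L^{\infty}(X)$. For any integer $k > 2\romd_{B}(\Phi(X)) = 2\romd_{B}(X)$, that theorem produces a linear map $L \colon L^{\infty}(X) \to \mathbb{R}^{k}$ satisfying $\|\Phi(x)-\Phi(y)\|_{\infty} \leq C_{L} |L\Phi(x)-L\Phi(y)|^{\theta}$ for every $\theta$ in the range governed by $\tau(\Phi(X))$. The monotonicity step is that, since $\tau(\Phi(X)) \leq \romd_{B}(X)$, the denominator appearing in Theorem~\ref{ET3} for $\Phi(X)$ is no larger than $k(1+\romd_{B}(X))(1+\tfrac{1}{2}\romd_{B}(X))$, so every $\theta$ obeying the bound stated in the corollary also satisfies the hypothesis of Theorem~\ref{ET3}. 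Setting $\psi = L \circ \Phi$, which is Lipschitz as the composition of an isometry with a bounded linear map, and using the isometry identity $\|\Phi(x)-\Phi(y)\|_{\infty} = d(x,y)$, I obtain $d(x,y) \leq C_{\psi} |\psi(x)-\psi(y)|^{\theta}$ for all $x,y \in X$, which is the desired conclusion (the notation $\|x-y\|$ in the statement being read as $d(x,y)$).

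I do not expect a genuine obstacle, as the argument is essentially a composition of results already in hand; the only points requiring care are the use of the isometry in \emph{both} directions to obtain the exact dimension equality $\romd_{B}(\Phi(X)) = \romd_{B}(X)$ rather than a one-sided inequality, and checking the direction of the inequality in the monotonicity step so that the corollary's narrower $\theta$-range is correctly seen to lie inside the range permitted by Theorem~\ref{ET3}.
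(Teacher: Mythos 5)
Your proposal is correct and follows essentially the same route as the paper: the paper likewise deduces the corollary by composing the Kuratowski isometry $\Phi$ of Lemma~\ref{ET100} with the linear map from Theorem~\ref{ET3} applied to $\Phi(X)\subset L^{\infty}(X)$, using precisely the chain $\tau(\Phi(X))\leq \romd_{B}(\Phi(X))=\romd_{B}(X)$ to see that the corollary's $\theta$-range lies inside that of Theorem~\ref{ET3}. Your write-up is in fact slightly more careful than the paper's one-line justification, since you make explicit the two-sided Lipschitz argument giving $\romd_{B}(\Phi(X))=\romd_{B}(X)$ and the direction of the monotonicity in $\tau$.
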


The range of $\theta$ in the above Corollary improves on the respective range in the paper by Foias and Olson \cite{FO}. There, the authors use a direct argument to prove that if $d = \max\{1, \romd_{B}(X)\}$, then for any given
\[\theta < \frac{1}{2d \left(1 + \frac{\romd_{B}(X)}{2}\right)}\] any compact metric space with finite box-counting dimension can be embedded into a sufficiently large Euclidean space such that the inverse is $\theta$-H\"older continuous.

\subsection{Embedding when $\tau(X) < 1$}

We now prove another embedding into a Hilbert space, in which the range of the H\"older exponent depends solely on the thickness exponent of $X$. This result also provides some motivation towards the next section.

\begin{proposition}\label{ETH2}
Suppose that $X$ is a compact subset of a Banach space $\mathfrak{B}$ with thickness exponent $\tau(X) <1$. Then, for every \[\alpha > \frac{1+ \tau(X)}{1 -\tau(X)}\] there exists a separable Hilbert space $H$ and a  linear map $\Phi \colon \mathcal{B} \to H$, such that
\[C_{\alpha}^{-1} \|x-y\|^{\alpha} \leq \left|\Phi(x) - \Phi(y)\right| \leq C_{\alpha}\|x-y\|, \, \mbox{for all} \,\, x,y \, \, \mbox{in} \, \,  X.\]
\end{proposition}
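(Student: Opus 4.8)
The plan is to reproduce the two-step structure of Proposition~\ref{ETH1}, but to drive the whole construction by the thickness rather than by a covering of $X-X$. Fix any $\tau$ with $\tau(X)<\tau<1$, so that $d(X,\epsilon)\le C\epsilon^{-\tau}$ for all small $\epsilon$. The heart of the matter is a thickness analogue of Lemma~\ref{1EL}: for each $n\in\N$ I would produce a linear map $\Psi_n\in\mathcal L(\mathfrak B;\R^{d_n})$ with $d_n\le C\,2^{n\tau/(1-\tau)}$ and $\|\Psi_n\|\le\sqrt{d_n}$, such that $|\Psi_n(x-y)|\ge\tfrac12 d_n^{-1/2}2^{-n}$ whenever $\|x-y\|\ge 2^{-n}$. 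Granting this, I assemble $\Phi(x)=\sum_{n=1}^\infty\lambda_n\,\Psi_n(x)\otimes e_n$ into the block Hilbert space $H$ constructed in the proof of Proposition~\ref{ETH1} (with block sizes $m_n=d_n$), and choose the weights $\lambda_n$ so that both the upper and lower bounds hold.

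For the lemma, the key idea is to replace the covering of $X-X$ by a single low-dimensional subspace supplied by the thickness. Choose the approximation scale $\delta_n=K\,2^{-n/(1-\tau)}$ (the exponent is dictated below), and let $V_n\subseteq\mathfrak B$ be a subspace of dimension $d_n=d(X,\delta_n)\le C\delta_n^{-\tau}$, which is of order $2^{n\tau/(1-\tau)}$, with $\dist(x,V_n)\le\delta_n$ for every $x\in X$. Take an Auerbach basis $(v_i)_{i=1}^{d_n}$ of $V_n$ with coordinate functionals $(f_i)$, $\|v_i\|=\|f_i\|_{V_n^*}=1$ and $f_i(v_j)=\delta_{ij}$, extend each $f_i$ to $F_i\in\mathfrak B^*$ with $\|F_i\|=1$ by Hahn--Banach, and set $\Psi_n=(F_1,\dots,F_{d_n})$; then $\|\Psi_n\|\le\sqrt{d_n}$ is immediate. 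Given $x,y\in X$ with $\|x-y\|\ge2^{-n}$, write $x-y=v+r$ with $v\in V_n$ a difference of nearest points in $V_n$ and $\|r\|\le2\delta_n$. Cauchy--Schwarz applied to $v=\sum_i f_i(v)v_i$ gives $|\Psi_n(v)|\ge d_n^{-1/2}\|v\|\ge d_n^{-1/2}(2^{-n}-2\delta_n)$, while $|\Psi_n(r)|\le\sqrt{d_n}\,\|r\|\le 2\sqrt{d_n}\,\delta_n$.

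The main obstacle is precisely the competition between these two terms. The Hahn--Banach error carries a factor $\sqrt{d_n}$, so to keep it below half of the main term $d_n^{-1/2}2^{-n}$ one needs $\delta_n\le c\,d_n^{-1}2^{-n}$; feeding in $d_n\le C\delta_n^{-\tau}$ this becomes $\delta_n^{1-\tau}\le c'\,2^{-n}$, which is solvable exactly because $\tau<1$ and forces $\delta_n$ of order $2^{-n/(1-\tau)}$. For $\tau\ge1$ the required scale no longer beats $2^{-n}$ and the argument collapses, consistent with the open problem flagged after Theorem~\ref{ET5}. With this choice $|\Psi_n(x-y)|\ge\tfrac12 d_n^{-1/2}2^{-n}$, proving the lemma. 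In the assembly the upper bound reduces to $\|\Phi\|^2\le\sum_n\lambda_n^2 d_n<\infty$, and for the lower bound one selects, for each pair with $2^{-n}\le\|x-y\|<2^{-(n-1)}$, the single block $n$, exactly as in Proposition~\ref{ETH1}; this yields $|\Phi(x-y)|\ge c\,\lambda_n d_n^{-1/2}2^{-n}$, and requiring this to dominate $\|x-y\|^\alpha$ fixes $\lambda_n$ of order $d_n^{1/2}2^{n(1-\alpha)}$. Summability of $\lambda_n^2 d_n\sim 2^{n(2/(1-\tau)-2\alpha)}$ then holds precisely when $\alpha>1/(1-\tau)$; letting $\tau\downarrow\tau(X)$ establishes the proposition, indeed for the wider range $\alpha>1/(1-\tau(X))$, which contains the claimed range $\alpha>(1+\tau(X))/(1-\tau(X))$. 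The two extreme cases $\|x-y\|\ge1$ and $x=y$ are handled verbatim as in Proposition~\ref{ETH1}.
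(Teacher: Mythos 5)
Your proposal is correct, and it shares the paper's skeleton: the paper's Lemma \ref{ETH3} is exactly your key lemma, built the same way (thickness gives subspaces $V_n$ at the critical scale $2^{-\beta_\tau n}$ with $\beta_\tau=1/(1-\tau)$ — your $\delta_n\sim 2^{-n/(1-\tau)}$, forced by the same balance you identify — then an Auerbach basis, Hahn--Banach extensions of the coordinate functionals, and $\phi_n=(f_1^n,\dots,f_{m_n}^n)$, assembled into the block Hilbert space with geometric weights). The genuine difference is quantitative, and it works in your favour. In the lower bound the paper keeps only the \emph{largest} Auerbach coordinate, estimating $|\phi_n(z)|\ge|z_n^j|\ge m_n^{-1}\|z_n\|$, so its per-block guarantee degrades to $2^{-(\beta_\tau n+1)}$ and its weights $2^{(\beta_\tau-\alpha)n}$ then force convergence exactly when $\alpha>\beta_\tau+\tau\beta_\tau=(1+\tau)/(1-\tau)$, which is where the proposition's threshold comes from. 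You instead apply Cauchy--Schwarz across \emph{all} coordinates, $\|v\|\le\sum_i|f_i(v)|\le\sqrt{d_n}\,|\Psi_n(v)|$, getting the sharper factor $d_n^{-1/2}$ in place of $m_n^{-1}$, and you recoup the remaining dimension loss by putting $d_n^{1/2}$ into block-dependent weights $\lambda_n\sim d_n^{1/2}2^{n(1-\alpha)}$ rather than purely geometric ones; the summability condition $\sum_n\lambda_n^2d_n<\infty$ then reduces, as you compute, to $\alpha>1/(1-\tau)$. So your argument proves a strictly stronger statement than Proposition \ref{ETH2} whenever $\tau(X)>0$ (range $\alpha>1/(1-\tau(X))$ versus $\alpha>(1+\tau(X))/(1-\tau(X))$), which of course implies the stated proposition; your diagnosis that $\tau\ge1$ kills the scale comparison $\delta_n^{1-\tau}\lesssim 2^{-n}$ matches the obstruction in the paper. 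The only points to tidy in a write-up are routine: nearest points in the finite-dimensional $V_n$ exist (or work within $2\delta_n$), the bound $d(X,\delta)\le C_\tau\delta^{-\tau}$ holds only for $\delta$ below some $\delta_0$ so the finitely many initial blocks need the constant adjusted (or the sum started at some $n_0$), and the constant $K$ must be fixed after $C_\tau$, using $1-\tau>0$ — all of which your sketch already implicitly accommodates.
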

Following the previous procedure, we first need the following lemma.

\begin{lemma}\label{ETH3}
Suppose that $\tau(X) <1.$ Then, for every $1> \tau > \tau(X)$, there exists some $\beta_{\tau}>1$ such that for every $n \in \mathbb{N}$, we can find $\phi_{n} \in \mathcal{L}\left(\mathcal{B} ; \mathbb{R}^{m_{n}}\right)$, $C_{\beta_{\tau}} > 0$, where $m_{n} \leq C_{\beta_{\tau}} 2^{\beta_{\tau} n k}$, with $\|\phi_{n}\| \leq \sqrt{m_{n}}$ and 
\[\left| \phi_{n}(x-y)\right| \geq 2^{-(\beta_{\tau} n+1)}, \, \, \text{whenever} \,\,  \|x-y\| \geq 2^{-n}.\]
\end{lemma}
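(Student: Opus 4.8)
The plan is to mimic the construction of Lemma~\ref{1EL}, but to replace the covering of $X-X$ by $\epsilon$-balls (which is governed by the box-counting dimension) with an approximation of $X$ by a low-dimensional subspace supplied by the thickness. Fix $\tau$ with $\tau(X) < \tau < 1$ and set $\beta_\tau = \tfrac{1}{1-\tau}$; since $\tau > 0$ we have $\beta_\tau > 1$, and because $\tau > \tau(X)$ there is a constant $C$ with $d(X,\delta) \le C\delta^{-\tau}$ for all sufficiently small $\delta$. For each $n$ I would choose an approximation scale $\delta_n = c\,2^{-\beta_\tau n}$, with a small constant $c$ to be fixed below, and take $V_n$ to be a subspace of $\mathcal{B}$ of dimension $m_n = d(X,\delta_n) \le C c^{-\tau} 2^{\beta_\tau \tau n}$ realising $\dist(x,V_n) \le \delta_n$ for every $x \in X$. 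This already yields the dimension bound of the stated form.

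To turn $V_n$ into a linear map I would use an Auerbach basis $v_1,\dots,v_{m_n}$ of $V_n$, which exists for any finite-dimensional normed space, with biorthogonal functionals $g_1,\dots,g_{m_n} \in V_n^*$ satisfying $\|v_i\| = \|g_i\| = 1$ and $g_i(v_j) = \delta_{ij}$, and extend each $g_i$ to $\mathcal{B}$ by Hahn--Banach without increasing its norm. Setting $\phi_n = (g_1,\dots,g_{m_n})$, one checks immediately that $\|\phi_n\| \le \sqrt{m_n}$, and that for $v = \sum a_i v_i \in V_n$ Cauchy--Schwarz gives $\|v\| \le \sum|a_i| \le \sqrt{m_n}\,|\phi_n(v)|$, i.e.\ $|\phi_n(v)| \ge \|v\|/\sqrt{m_n}$. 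Thus $\phi_n$ is bi-Lipschitz on $V_n$ with a controlled lower constant, which is exactly what the Auerbach basis buys us.

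For the separation estimate, take $x,y \in X$ with $\|x-y\| \ge 2^{-n}$, write $z = x-y$, and pick $v,w \in V_n$ with $\|x-v\|,\|y-w\| \le \delta_n$. Then $v-w \in V_n$ with $\|v-w\| \ge 2^{-n} - 2\delta_n \ge 2^{-(n+1)}$ (using $\beta_\tau>1$, so $\delta_n \ll 2^{-n}$ for large $n$), whence $|\phi_n(v-w)| \ge 2^{-(n+1)}/\sqrt{m_n}$. The error incurred by replacing $z$ with $v-w$ is $|\phi_n(z-(v-w))| \le \|\phi_n\|\,2\delta_n \le 2\sqrt{m_n}\,\delta_n$. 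The crux is to choose $c$ so that this error is at most half of $|\phi_n(v-w)|$: this reduces to requiring $m_n \delta_n \le 2^{-(n+3)}$, and since $m_n \delta_n \le Cc^{-\tau}2^{\beta_\tau\tau n}\cdot c\,2^{-\beta_\tau n} = Cc^{1-\tau}2^{-\beta_\tau(1-\tau)n} = Cc^{1-\tau}2^{-n}$, this holds once $c$ is small enough --- and it is precisely here that $\tau < 1$ is indispensable, since only then is the exponent $1-\tau$ positive so that $c^{1-\tau}\to 0$. With such a $c$ fixed, $|\phi_n(z)| \ge \tfrac12 |\phi_n(v-w)| \ge 2^{-(n+2)}/\sqrt{m_n} \ge C'2^{-n(1+\beta_\tau\tau/2)}$, and a short computation using $\beta_\tau = 1/(1-\tau)$ shows $1 + \beta_\tau\tau/2 < \beta_\tau$, so the right-hand side dominates $2^{-(\beta_\tau n+1)}$ for all large $n$. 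The finitely many small $n$ are handled by instead taking the map from Lemma~\ref{1EL} (which already satisfies $|\phi_n(z)| \ge 2^{-(n+1)} \ge 2^{-(\beta_\tau n+1)}$ and $\|\phi_n\|\le\sqrt{m_n}$), absorbing their dimensions into the constant $C_{\beta_\tau}$.

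The main obstacle is the balancing act in the third paragraph: the signal $|\phi_n(v-w)|$ decays like $2^{-n}/\sqrt{m_n}$ while the approximation error grows like $\sqrt{m_n}\,\delta_n$, and both are controlled by the same subspace dimension $m_n \approx \delta_n^{-\tau}$. Making the error negligible forces $\delta_n$ to be polynomially smaller than $2^{-n}$, which in turn inflates $m_n$; the requirement that this inflation remain subexponential in $n$ is exactly the condition $\beta_\tau(1-\tau)=1$ with $\beta_\tau$ finite, i.e.\ $\tau<1$. This is consistent with the paper's remark that extending the result to $\tau \ge 1$ is open.
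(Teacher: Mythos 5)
Your proof is correct and follows essentially the same route as the paper's: thickness-provided subspaces $V_n$ at scale $\sim 2^{-\beta_\tau n}$, a map $\phi_n$ built from an Auerbach basis extended by Hahn--Banach, a lower bound for $|\phi_n|$ on $V_n$ via a coordinate estimate, and the choice $\beta_\tau = 1/(1-\tau)$ to balance signal against approximation error. If anything your write-up is more careful than the paper's: your Cauchy--Schwarz bound $|\phi_n(v)| \ge \|v\|/\sqrt{m_n}$ sharpens the paper's max-coordinate bound $\|v\|/m_n$, and your tunable constant $c$ in $\delta_n$ (together with the separate treatment of finitely many small $n$) repairs the loose constant-tracking in the paper's hard-coded approximation scale $2^{-\beta n-2}$.
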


Before we prove the above lemma, we recall the definition of an Auerbach basis for a finite--dimensional Banach space.
\begin{definition}
Suppose that $U$ is a finite--dimensional Banach space. 
An Auerbach basis for $U$ is formed by a basis $\{e_{1},...,e_{n}\}$ of $U$ coupled with corresponding elements $\{f_{1},...f_{n}\}$ of $U^{*}$ that satisfy $\|f_{i}\|=\|e_{i}\|=1$ and 
\[f_{i}(e_{j})=\delta_{ij}.\]
\end{definition}
For a proof of the existence of such a basis, see Exercise 7.3 in the book of Robinson \cite{JCR}, for example.

\begin{proof}[Proof of Lemma \ref{ETH3}]
Take any $\beta >1$. Then, by the definition of the thickness exponent, there exists a subspace $V_{n}$ of $\mathcal{B}$ and some $C=C_{\beta}>0$, such that $\dim(V_{n})=m_{n} \leq C 2^{\beta n \tau}$ and 
\[\mathrm{dist}(x,V_{n}) \leq 2^{-\beta n -2}.\]

Suppose that $\{u^{n}_{1},..,u^{n}_{m_{n}}\}$ is an Auerbach basis for $V_{n}$, and let $\{f^{n}_{1},..,f^{n}_{m_{n}}\}$ be the corresponding elements of $V_{n}^{*}$ that satisfy $\|f^{n}_{i}\| = 1 ,\, \forall i$ and
\[f^{n}_{i}(u^{n}_{j}) = \delta_{ij}.\]

We now define a projection $P_{n}$ onto $V_{n}$ as
\[P_{n}(x) = \sum_{i=1}^{m_{n}} f^{n }_{i}(x) u^{n}_{i}\]
and define $\phi_{n} \colon \mathfrak{B} \to \mathbb{R}^{m_{n}}$ by setting
\[\phi_{n}(x) = (f^{n}_{1}(x),...,f^{n}_{m_{n}}(x)).\]

Obviously $\|\phi_{n}\| \leq \sqrt{m_{n}} \leq 2^{\beta n \tau/2}$. Moreover, let 
$z \in X-X$ be such that $\|z\| \geq 2^{-n}$ and choose $z_{n} \in V_{n}$ such that
\[\|z-z_{n}\| \leq 2^{-\beta n -2}.\]
Then 
\[\|z_{n}\| \geq 2^{-n} - 2^{-\beta n -2} \geq 2^{-n} - 2^{-n-2} \geq 2^{-n-1}.\]
Now, write $z_{n} = \sum_{i=1}^{m_{n}} z_{n}^{i} u^{n}_{i}$ and take $j \leq m_{n}$ such that $\|(z_{n}^{1},...,z_{n}^{m_{n}})\|_{\infty}= |z_{n}^{j}|$.
Then,
\begin{align*}
|\phi_{n}(z)\|_{2} & \geq |f^{n }_{j}(z)| \geq |f^{n }_{j}(z_{n})| - |f^{n }_{j}(z-z_{n})|\\
& \geq |z_{n}^{j}| - \|z-z_{n}\| \geq m_{n}^{-1} \|z_{n}\| - 2^{-\beta n-2}\\
& \geq C 2^{-\beta n \tau} 2^{-n} - 2^{-\beta n-2} = C 2^{-n(1+\beta \tau)} - \frac{1}{4}2^{-\beta n}.
\end{align*}
Now, we choose $\beta = \beta_{\tau}$ such that
\[1+ \beta_{\tau} \tau = \beta_{\tau} \Leftrightarrow \beta_{\tau} = \frac{1}{1-\tau} >1,\]
which concludes the proof.
\end{proof}

We now prove Proposition \ref{ETH2} .

\begin{proof}[Proof of Proposition \ref{ETH2}]
Take $1 >\tau > \tau(X)$ such that
\[\alpha > \frac{1+\tau}{1-\tau} = \beta_{\tau}+ \tau\beta_{\tau},\] and let $\phi_{n}, m_{n}$ be as given in the previous lemma. 

Now, let $(e_{n})_{n=1}^{\infty}$ be the standard basis for $\ell_{2}$ and following the construction in the proof of Proposition \ref{ETH1} we define $H$ based on the sequence $(m_{n})_{n=1}^{\infty}$. We now set 
\[\Phi(x) = \sum_{n=1}^{\infty} 2^{(\beta_{\tau}-\alpha) n} \phi_{n}(x)\otimes e_{n} \in H.\]
Then,
\[\|\Phi\| \leq \sum_{n=1}^{\infty} 2^{(\beta_{\tau}-\alpha) n}  2^{\tau \beta_{\tau}n} < \infty.\]
Now, take any $x,y \in X$ with $x \neq y$.
If $\|x-y\| \geq 1$, we argue exactly as in the proof of Proposition \ref{ETH1}.
If $0 < \|x-y\| <1$, let $n$ such that $2^{-n} \leq \|x-y\| < 2^{-n+1}.$ 

Therefore
\begin{align*}
\|\Phi(x-y)\| & \geq 2^{(\beta_{\tau}-\alpha) n} \|\phi_{n}(x-y)\|_{2}\\
& \geq 2^{(\beta_{\tau}-\alpha) n} 2^{-\beta_{\tau} n -1} \\
& \geq  2^{-\alpha n -1} \geq C_{\alpha} \|x-y\|^{\alpha}.\qedhere
\end{align*}
\end{proof}

Just as in the previous situation, we can now obtain a linear embedding from a compact subset of a Banach space with finite box-counting dimension into a finite--dimensional space such that the inverse is $\theta$-H\"older continuous for any 
\[0< \theta < \frac{1 - \tau(X)}{(1 + \tau(X))\left(1 + \frac{\tau(X)}{2}\right)}.\] However, in the next section, we give a more direct proof that not only improves this exponent, but also provides a set of embeddings with `full measure'.

\section{Embedding subsets of Banach spaces into $\mathbb{R}^{k}$}\label{S3}

Before we prove our main embedding result, we will recall, following Robinson \cite{JCR}, the construction of a certain probability measure that is based on the ideas in Hunt and Kaloshin \cite{HK} and will play a key role in our proof.

\subsection{A measure based on sequences of linear subspaces}\label{M}

Suppose that $\mathfrak{B}$ is a Banach space and $\mathcal{V} = \{V_{n}\}_{n=1}^{\infty}$ a sequence of finite--dimensional subspaces of $\mathfrak{B}^{*}$, the dual of $\mathfrak{B}$.
Let us denote by $d_{n}$ the dimension of $V_{n}$ and by $B_{n}$ the unit ball in $V_{n}$. 

Now, we fix a real number $\alpha >1$ and define the space $\mathbb{E}_{\alpha}(\mathcal{V})$ as the collection of linear maps $L \colon \mathfrak{B} \to \mathbb{R}^{k}$ given by 
\[\mathbb{E} = \mathbb{E}_{\alpha}(\mathcal{V}) = \left\lbrace L = (L_{1}, L_{2},..., L_{k}) : L_{i} = \sum_{n=1}^{\infty} n^{-\alpha} \phi_{i,n} , \, \, \phi_{i,n} \in B_{n}\right\rbrace.\]
Let us also define
\[\mathbb{E}_{0} = \left\lbrace \sum_{n=1}^{\infty} n^{-\alpha} \phi_{i,n} , \, \, \phi_{i,n} \in B_{n}\right\rbrace.\]
Clearly $\mathbb{E} = \left(\mathbb{E}_{0}\right)^{k}$.

To define a measure on $\mathbb{E}$, we first take a basis for $V_{n}$ so that we can identify $B_{n}$ with a symmetric convex set $U_{n} \subset \mathbb{R}^{d_{n}}$. Then, we construct each $L_{i}$ randomly by choosing each $\phi_{i,n}$ with respect to the normalised $d_{n}$--dimensional Lebesgue measure $\lambda_{n}$ on $U_{n}$. Finally, by taking $k$ copies of this measure we obtain a measure on $\mathbb{E}.$
In particular we first consider $\mathbb{E}_{0}$ as a product space
\[\mathbb{E}_{0} = \prod_{n=1}^{\infty} B_{n},\] and define a measure $\mu_{0}$ on $\mathbb{E}_{0}$ as
\[\mu_{0} = \otimes_{n=1}^{\infty} \lambda_{n}.\]
Secondly, we consider  $\mathbb{E} = \mathbb{E}_{0}^{k}$ and define $\mu$ on $\mathbb{E}$ as
\[\mu = \prod_{i=1}^{k} \mu_{0}.\]

Hunt and Kaloshin \cite{HK} proved the following upper bound on

\[\mu\{ L \in \mathbb{E} : |Lx| \leq \epsilon\},\]
for $x \in \mathcal{B}$ and any $\epsilon > 0.$ For a more detailed proof, see Robinson \cite{JCR}.

\begin{lemma}\label{1.6}
Suppose that $x \in \mathcal{B}$, $\epsilon > 0, a \in \mathbb{R}$ and $\mathcal{V}=\{V_{n}\}$ as above. Then
\[\lambda_{n}\{\phi \in B_{n} : |a +  \phi(x) | \leq C \epsilon \} \leq  d_{n} \frac{\epsilon}{|g(x)|},\]
for any $g \in B_{n}$.
\end{lemma}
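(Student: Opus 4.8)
The plan is to reduce the statement to a purely finite-dimensional, geometric estimate about the proportion of a symmetric convex body that lies inside a thin slab, and then to extract the dimensional factor $d_{n}$ from a volume comparison built around the point $g$.

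First I would fix a basis of $V_{n}$ and use it to identify $V_{n}$ with $\mathbb{R}^{d_{n}}$, so that the unit ball $B_{n}$ corresponds to the symmetric convex body $U_{n}\subset\mathbb{R}^{d_{n}}$ on which $\lambda_{n}$ is normalised Lebesgue measure, i.e.\ $\lambda_{n}=\mathcal{L}^{d_{n}}/\mathcal{L}^{d_{n}}(U_{n})$. Under this identification the evaluation $\phi\mapsto\phi(x)$ becomes a fixed linear functional $\ell\colon\mathbb{R}^{d_{n}}\to\mathbb{R}$, and the chosen $g\in B_{n}$ becomes a point $u_{0}\in U_{n}$ with $\ell(u_{0})=g(x)$; we may assume $g(x)\neq0$, since otherwise the right-hand side is infinite. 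The set to be measured then becomes $U_{n}$ intersected with the slab $\{u:|a+\ell(u)|\le C\epsilon\}$ between two hyperplanes parallel to $\{\ell=0\}$.

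The main step is a Fubini computation in the direction of $u_{0}$. Writing $u=w+su_{0}$ with $w\in u_{0}^{\perp}$, Lebesgue measure factorises as $|u_{0}|_{2}\,\mathrm{d}s\,\mathrm{d}\mathcal{L}^{d_{n}-1}(w)$, and since $\ell(u)=\ell(w)+s\,g(x)$, for each fixed $w$ the slab constrains $s$ to an interval of length $2C\epsilon/|g(x)|$, irrespective of $a$. Intersecting with $U_{n}$ only shortens each fibre, so
\[
\mathcal{L}^{d_{n}}\{u\in U_{n}:|a+\ell(u)|\le C\epsilon\}\ \le\ |u_{0}|_{2}\,\frac{2C\epsilon}{|g(x)|}\,\mathcal{L}^{d_{n}-1}(PU_{n}),
\]
where $P$ is the orthogonal projection onto $u_{0}^{\perp}$. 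Dividing by $\mathcal{L}^{d_{n}}(U_{n})$ reduces everything to controlling the ratio $|u_{0}|_{2}\,\mathcal{L}^{d_{n}-1}(PU_{n})/\mathcal{L}^{d_{n}}(U_{n})$.

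This last ratio is where the factor $d_{n}$ must come from, and I expect it to be the crux. The inequality I would aim for is
\[
\mathcal{L}^{d_{n}}(U_{n})\ \ge\ \tfrac{2}{d_{n}}\,|u_{0}|_{2}\,\mathcal{L}^{d_{n}-1}(PU_{n}).
\]
To prove it, note that $\mathcal{L}^{d_{n}}(U_{n})=|u_{0}|_{2}\int_{PU_{n}}\ell_{U}(w)\,\mathrm{d}w$, where $\ell_{U}(w)$ is the length of the fibre of $U_{n}$ over $w$; by convexity $\ell_{U}$ is concave on the symmetric convex base $PU_{n}$, and since $\pm u_{0}\in U_{n}$ the central fibre satisfies $\ell_{U}(0)\ge 2$. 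Concavity then forces $\ell_{U}(w)\ge 2\bigl(1-\|w\|_{PU_{n}}\bigr)$ for the Minkowski gauge of $PU_{n}$, and integrating this cone gives $\int_{PU_{n}}\ell_{U}\ge\tfrac{2}{d_{n}}\mathcal{L}^{d_{n}-1}(PU_{n})$, as required. (Equivalently one may invoke Brunn's theorem that central parallel sections are largest, together with the inscribed bipyramid $\mathrm{conv}(\{\pm u_{0}\}\cup(U_{n}\cap\{\ell=0\}))\subseteq U_{n}$, whose volume is $\tfrac{2}{d_{n}}$ times base area times height $|g(x)|/|\ell|_{2}$.) Combining the two displays, the factors of $2$ and $|u_{0}|_{2}$ cancel and one obtains $\lambda_{n}\{|a+\phi(x)|\le C\epsilon\}\le d_{n}\,C\epsilon/|g(x)|$, which is exactly the assertion (the constant $C$ merely rescaling the threshold). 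The only genuinely non-routine ingredient is the volume lower bound above, which is precisely what produces the dimensional factor $d_{n}$.
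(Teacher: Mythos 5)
Your proof is correct, and it is essentially the standard argument: the paper itself states Lemma \ref{1.6} without proof, deferring to Hunt \& Kaloshin \cite{HK} and Robinson \cite{JCR}, where the bound is obtained by exactly this kind of Brunn-type estimate (a thin slab meets the symmetric convex body $U_{n}$ in at most $2C\epsilon/|g(x)|$ times a maximal fibre, while the inscribed cone/bipyramid over the central section gives the lower bound $\mathcal{L}^{d_{n}}(U_{n})\geq \tfrac{2}{d_{n}}|u_{0}|_{2}\,\mathcal{L}^{d_{n}-1}(PU_{n})$, producing the factor $d_{n}$). Your Fubini decomposition along the direction $u_{0}$, with concavity of the fibre-length function and $\ell_{U}(0)\geq 2$ from $\pm u_{0}\in U_{n}$, is a clean self-contained version of that argument, and your handling of the constant $C$ (which the paper's statement carries on the left only) is the sensible reading.
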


We are now ready to give the proof of Theorem \ref{ET5}. 

\begin{theorem}\label{ET5}
Let $X$ be a compact subset of a Banach space $\mathfrak{B}$ with thickness exponent $\tau(X) < 1$ and box--counting dimension $\romd_{B}(X)< \infty$. Then for any integer $k > 2d_{B}(X)$ and any given $\theta$ with
\[ 0 < \theta < (1-\tau(X)) \frac{k -2d_{B}(X)}{k\left(1 + \tau(X)\right)},\]
$\mu$--almost every linear map $L \colon \mathcal{B} \to \mathbb{R}^{k}$ satisfies:
\begin{equation}\label{3.10}
\|x-y\| \leq C_{L}|Lx-Ly|^{\theta}, \, \, \, \forall \, \, \, x,y \in X.
\end{equation}
In particular, $L$ is bijective from $X$ onto $L(X)$ with a H\"older continuous inverse.
\end{theorem}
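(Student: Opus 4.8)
The plan is to realise the abstract measure $\mu$ of Section \ref{M} on a sequence of \emph{dual} subspaces manufactured from the thickness, and then to prove that for $\mu$-almost every $L$ one has a lower bound $|L(x-y)| \geq c_L\|x-y\|^{\gamma}$ for all $x,y\in X$, which is exactly the desired H\"older estimate with $\theta = 1/\gamma$. First I would fix $\tau$ and $d$ with $\tau(X) < \tau < 1$ and $\romd_B(X) < d$, set $\beta = \beta_\tau = 1/(1-\tau)$, and run the construction of Lemma \ref{ETH3}: for each $n$ take the Auerbach functionals $f^n_1,\dots,f^n_{m_n}$ with $m_n \leq C\,2^{\beta n \tau}$, extend each to a norm-one element of $\mathcal{B}^{*}$ by Hahn--Banach, and put $W_n = \mathrm{span}\{f^n_1,\dots,f^n_{m_n}\}\subset\mathcal{B}^{*}$, so that $d_n := \dim W_n \leq m_n$. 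Taking $\mathcal{V} = \{W_n\}$ and any fixed $\alpha>1$ gives the measure $\mu$ on $\mathbb{E} = \mathbb{E}_\alpha(\mathcal{V})$. A convenient free gift of this set-up is that every $L\in\mathbb{E}$ obeys $\|L\| \leq \sqrt{k}\sum_n n^{-\alpha} =: M < \infty$, so the operator norm is uniformly bounded, which will let me absorb the discretisation error below.

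The pointwise engine is a single-pair estimate. Fix $z = x-y$ with $\|z\| \geq 2^{-n}$; Lemma \ref{ETH3} supplies a $j$ with $f^n_j\in B_n$ (the unit ball of $W_n$) and $|f^n_j(z)| \geq 2^{-(\beta n+1)}$. Writing $L_i(z) = n^{-\alpha}\phi_{i,n}(z) + a_i$, freezing the coordinates $\phi_{i,m}$ for $m\neq n$, applying Lemma \ref{1.6} with $g = f^n_j$, and integrating out the remaining coordinates by Fubini yields
\[
\mu_0\{L_i : |L_i(z)| \leq \epsilon\} \leq C\, d_n\, n^{\alpha}\, 2^{\beta n}\,\epsilon .
\]
Since the $k$ coordinates are independent and $|L_i(z)| \leq |L(z)|$, and since $d_n \leq C\,2^{\beta n\tau}$, I obtain
\[
\mu\{L : |L(z)| \leq \epsilon\} \leq \bigl(C\, n^{\alpha}\, 2^{\beta n(1+\tau)}\,\epsilon\bigr)^{k}.
\]

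Next I would discretise scale by scale. For each $n$ cover $X$ by a $2^{-p_n}$-net, giving at most $N(X,2^{-p_n})^2 \leq C\,2^{2p_n d}$ pairs of centres $(\hat x,\hat y)$ with $\|\hat x-\hat y\|\geq 2^{-n-1}$; a union bound over these pairs, combined with $\|L\|\leq M$ to pass from net pairs to arbitrary pairs, controls the bad set
\[
Q_n = \bigl\{L : \exists\, x,y\in X,\ \|x-y\|\geq 2^{-n},\ |L(x-y)| < \tfrac12\eta_n\bigr\}.
\]
Choosing $\eta_n = 2^{-\gamma n}$ and $p_n$ minimal with $2M\,2^{-p_n}\leq \tfrac12\eta_n$ (so $p_n \approx \gamma n + \mathrm{const}$), the bound on $\mu(Q_n)$ is a polynomial in $n$ times $2^{\,n[2\gamma d + k\beta(1+\tau) - k\gamma]}$, which is summable precisely when $\gamma > k\beta(1+\tau)/(k-2d) = k(1+\tau)/[(1-\tau)(k-2d)]$. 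For such $\gamma$, Borel--Cantelli gives for $\mu$-a.e.\ $L$ an $N_L$ with $L\notin Q_n$ for all $n\geq N_L$. Given any pair, pick $n$ with $2^{-n}\leq\|x-y\|<2^{-n+1}$: if $n\geq N_L$ then $|L(x-y)|\geq \tfrac12\eta_n \geq c\|x-y\|^{\gamma}$; if $n<N_L$ (the far pairs) then $\|x-y\|\geq 2^{-N_L}$, so the scale-$N_L$ bound gives $|L(x-y)|\geq\tfrac12\eta_{N_L}$, a positive $L$-constant which, since $\|x-y\|\leq\mathrm{diam}(X)$, again dominates $c'\|x-y\|^{\gamma}$. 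Letting $\tau\downarrow\tau(X)$, $d\downarrow\romd_B(X)$ and $\theta = 1/\gamma$ recovers the stated range, and the lower bound forces $L$ to be injective on $X$ with the claimed H\"older inverse.

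The main obstacle is the exponent bookkeeping of the last paragraph: the dual-subspace dimension $d_n\sim 2^{\beta n\tau}$, the functional lower bound $\sim 2^{-\beta n}$, and the covering count $\sim 2^{2p_n d}$ with $p_n\sim\gamma n$ must be balanced so that the summability threshold for $\gamma$ collapses to exactly $k(1+\tau)/[(1-\tau)(k-2d)]$; it is here that the characteristic factor $\beta = 1/(1-\tau)$ injects both the $(1-\tau)$ and the $(1+\tau)$ into the final range of $\theta$, and it is the reason the argument stalls at $\tau\geq 1$, where $\beta_\tau$ ceases to exist. A secondary delicate point is to take $p_n$ large enough that the discretisation error $2M\,2^{-p_n}$ is swallowed by $\eta_n$ without inflating $2^{2p_n d}$ beyond what summability tolerates.
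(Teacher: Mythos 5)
Your proposal is correct and follows essentially the same route as the paper's own proof: thickness-generated subspaces $V_{n}$, Auerbach bases extended by Hahn--Banach to dual subspaces carrying the measure of Section \ref{M}, the single-pair bound via Lemma \ref{1.6} with the Auerbach coordinate functional, the balancing choice $\beta=1/(1-\tau)$, and covering, union bound, Borel--Cantelli and the same two-case endgame. The only differences are cosmetic reparametrisations --- you work at separation $2^{-n}$ with threshold $2^{-\gamma n}$, $\gamma=1/\theta$, and count net pairs in $X$, where the paper uses $Z_{n}=\{z\in X-X:\|z\|\geq 2^{-\theta n}\}$ with threshold $2^{-n}$ and covers $X-X$ directly --- and the exponent bookkeeping collapses to the identical range of $\theta$.
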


The proof follows closely the techniques introduced in Hunt and Kaloshin's argument with some key differences. In particular, we first use the thickness exponent to construct a sequence of finite--dimensional subspaces of $\mathfrak{B}$, that `approximate' $X$. Then, we use an Auerbach basis to define a sequence of finite--dimensional subspaces of the dual of $\mathfrak{B}$ and define our probability measure based on this sequence.
\begin{proof}
Clearly \eqref{3.10} holds if and only if 
\begin{equation}\label{3.11}
\|z\| \leq C_{L} |Lz|^{\theta} \, \, \, \forall \, \, \, z \in X-X.
\end{equation}

We want to bound the measure of linear maps that fail to satisfy \eqref{3.11} for some $z$ in a restricted subset of $X-X$.
Take $1 > \tau > \tau(X)$ and $d > \romd_{B}(X)$ such that
\begin{equation}\label{3.15}
0 < \theta < (1-\tau)\frac{k -2d}{k\left(1 + \tau\right)}.
\end{equation}

Take some $\beta >1$, which will be chosen later on and for every $n \in \mathbb{N}$, by definition of the thickness exponent, we can find a linear subspace $V_{n} \subset \mathcal{B}$ such that 
\begin{equation}\label{3.12}
\dim(V_{n}) \leq C_{\beta}2^{\theta n \tau \beta}
\end{equation}
and
\begin{equation}\label{3.13}
\romd(X,V_{n}) \leq \frac{2^{-\theta n \beta}}{3}.
\end{equation}

Using an Auerbach basis for $V_{n}$, along with the Hahn--Banach theorem, we construct a subspace $G_{n}$ of $\mathcal{B}^{*}$, as follows.
Suppose that \[\{e^{n}_{1},...,e^{n}_{d_{n}}\}\] is a basis for $V_{n}$ and 
\[\{r^{n}_{1},...,r^{n}_{d_{n}}\}\] is the corresponding basis for $V_{n}^{*}$, which satisfies:
\[\|r^{n}_{i}\| = 1, \, \, \, \forall \, \, \, i\] and 
\[r^{n}_{i}(e^{n}_{j})= \delta_{ij}, \, \, \, \forall \, \, \, i\neq j.\]

Using the Hahn--Banach theorem, we extend the elements $r_{1}^{n},...,r_{d_{n}}^{n} \in V_{n}^{*}$ to maps $f_{1}^{n},...,f_{d_{n}}^{n}$ in $\mathfrak{B}^{*}$ and set
\[G_{n} = \langle f^{n}_{1},..,f^{n}_{d_{n}}\rangle,\]
a subspace of $\mathcal{B}^{*}$, that is at most $d_{n}$--dimensional.

We now construct a measure based on the sequence $\mathcal{G} = \{G_{n}\}_{n=1}^{\infty}$, according to the definitions given in the beginning of this section.

In particular, if $S_{n}$ is the unit ball in $G_{n}$, we define
\[\mathbb{E} = \mathbb{E}_{2}\left(\mathcal{G}\right) =  \{L=(L_{1},..,L_{k}) : L_{i}=\sum_{n=1}^{\infty} n^{-2} \phi_{i,n} , \phi_{i,n} \in S_{n}\}.\]
Given the above construction, we now consider

\[Z_{n} = \{z \in X-X: \|z\| \geq 2^{-\theta n}\}\] and
\[Q_{n} = \{L \in E : |Lz| \leq 2^{-n}, \, \, \, \text{for some} \, \, \, z \in Z_{n}.\}.\]

Our goal is to bound the measure of $Q_{n}$ by something summable over $n$ and use the Borel--Cantelli Lemma. 

Using the fact that $\romd_{B}(X-X) \leq 2\romd_{B}(X)$, we cover $Z_{n}$ by $C 2^{2nd}$ closed  balls of radius $2^{-n}$. 
We observe that if $z$ is in the intersection of $Z_{n}$ with one of these balls, which we denote by $B(z_{0}, 2^{-n}),$ then

\[|Lz_{0}| \leq |Lz| + |L(z-z_{0})| \leq (1 + \|L\|) 2^{-n}.\]
But, $\|L\|$ is bounded uniformly for all $L \in \mathbb{E}$.
Indeed
\[\|L\|^{2} \leq \sum_{i=1}^{k} |L_{i}|^{2}\] and
\[|L_{i}|^{2} = \left\vert \sum_{n=1}^{\infty} n^{-2} \phi_{i,n} \right\vert^{2} \leq 
\sum_{n=1}^{\infty} n^{-4} = C<\infty.\]
Hence,
\[|Lz_{0}| \leq M2^{-n},\]
for some positive constant $M$, which holds for all $L \in \mathbb{E}$.

Now, we wish to bound the measure of $L \in \mathbb{E}$ that fail to satisfy \eqref{3.11}, for some $z$ in $ Y= Z_{n}\cap B(z_{0},2^{-n})$. From the above discussion, we have that

\begin{align*}
\mu\{L \in E: |Lz| \leq 2^{-n} \, \, \, \text{for some} \, \, \, z \in Y\} & \leq  \mu\{L \in E : |Lz_{0}| \leq M2^{-n} \}.
\end{align*}
Now, consider $z_{n} \in V_{n}$ such that $\|z_{n} - z_{0}\| \leq 2^{-\theta\beta n}/3.$ 
Therefore,
\[\|z_{n}\| \geq C 2^{-\theta n} - 2^{-\theta \beta n}/3 \geq C 2^{-\theta n}.\]

We now write $z_{n}$ as
\[z_{n} = \sum_{i=1}^{d_{n}} z^{i}_{n} e_{i}^{n},\]
and consider $j\leq d_{n}$ such that
\[z_{n}^{j} = \|(z_{n}^{1},...,z_{n}^{d_{n}})\|_{\infty}.\]
We now define 
\[g_{n} = f_{j}^{n},\]
which satisfies
\[\|g_{n}\| = 1 \, \, \, \text{and} \, \, \, |g_{n}(z_{n})|\geq d_{n}^{-1} \|z_{n}\|.\]

Hence,

\begin{align*}
|g_{n}(z_{0})| & \geq d_{n}^{-1} \|z_{n}\| - \|z_{n}-z_{0}\| \geq C 2^{-n\theta\beta\tau} 2^{-\theta n} - 2^{-\theta \beta n}/3\\
& = C2^{-n\theta(\beta\tau+1)} - 2^{-n\theta\beta}/3.
\end{align*}
We now choose $\beta$ such that $\beta\tau +1 = \beta \Longleftrightarrow \beta = 
\frac{1}{1-\tau},$ which gives that
\[|g_{n}(z_{0})| \geq C 2^{-n\theta \beta}.\]

Using Lemma \ref{1.6}, we obtain:
\begin{align*}
\mu\{L \in E : |Lz_{0}| \leq M2^{-n} \} & \leq \left( n^{2} \, d_{n} \frac{M 2^{-n}}{|g_{n}(z_{0})|}\right)^{k}\\
& \leq C \left(n^{2} 2^{n \beta \theta \tau} 2^{-n} 2^{\theta \beta n}\right)^{k}.
\end{align*}

Thus,
\begin{align*}
\mu(Q_{n}) & \leq C 2^{2nd} \left(n^{2} 2^{n \beta \theta \tau} 2^{-n} 2^{\theta \beta n}\right)^{k},
\end{align*}
so the sum $\sum_{n=1}^{\infty} \mu(Q_{n})$ is finite iff 
\[\theta < (1-\tau) \frac{k -2d_{B}(X)}{k\left(1 + \tau\right)}.\]

Thus, by the Borel--Cantelli lemma $\mu(\limsup Q_{n}) =0$, i.e.\ 
$\mu$--almost every $L$ lies in only a finite number of the $Q_{n}$. For such an $L$, there exists a $n_{L}$, such that for every $n \geq n_{L}$, $L$ does not belong to $Q_{n}$. In particular
\[ \text{if} \, \, \, |z| \geq 2^{-n\theta} \, \, \,  \text{then} \, \, \, |Lz| \geq 2^{-n}, \, \, \, \text{for all}
 \, \, n \geq n_{L}.\]

To complete the argument, we use the fact that $X-X$ is compact and we claim the existence of an $R >0$, such that $ X-X \subseteq B(0,R).$ 

Now, let $z \in X-X$ and consider the following cases
\[ \text{if} \, \, \, |z| \geq 2^{-n_{L}\theta},\]
then
\[|Lz| \geq 2^{-n_{L}} \geq \frac{2^{-n_{L}}}{R^{\frac{1}{\theta}}}|z|^{\frac{1}{\theta}}.\]
If
\[ |z| \leq 2^{-n_{L}\theta},\]
then there exists $n \geq n_{L}$ such that
\[ 2^{-(n+1)\theta} \leq |z| < 2^{-n\theta}.\] Thus,
\[|Lz| \geq 2^{-(n+1)} > \frac{1}{2} |z|^{\frac{1}{\theta}}.\]

We now put these two cases together to conclude that
\[|Lz| \geq C_{L} |z|^{\frac{1}{\theta}},\] where 
\begin{align}C_{L} & = \max\left\lbrace \frac{2^{-n_{L}}}{R^{\frac{1}{\theta}}}, 2^{-1}\right\rbrace. \qedhere
\end{align}
\end{proof}

There are a number of open questions that arise naturally from the results above. The most important are the following.
\begin{enumerate}
\item Can we extend Theorem \ref{ET5} without the restriction of the thickness exponent being less than $1$, in such a way that it improves on Theorem \ref{ET3}?
\item Can we prove the existence of a nonlinear bi--Lipschitz embedding into either a Euclidean or a Hilbert space when the thickness equals zero?
\end{enumerate}
\newpage

\section{Thickness and Dual Thickness}\label{S4}

In this section, we concentrate on the relation of the two exponents that were defined in the introduction in the context of a Hilbert and a Banach space. We note that we already know that the thickness is bounded above by the box--counting dimension.

We first give an immediate upper bound on the dual thickness based on yet another exponent.
\begin{definition}
Suppose that $\mathfrak{B}$ is a Banach space and $X \subset \mathfrak{B}$. 
Then, given any $\alpha >0$ and $\epsilon >0$ we denote by $m_{\alpha}(X,\epsilon)$ the smallest dimension of all those finite--dimensional subspaces $V$ of $\mathfrak{B}^{*}$ such that whenever $x,y \in X$ with $\|x-y\| \geq \epsilon$ there exists some $\Phi \in V$ with $\|\Phi\|=1$ that satisfies
\[|\Phi(x-y)| \geq \alpha \epsilon.\]
Then we define
\[\sigma_{\alpha}(X) = \limsup_{\epsilon \rightarrow 0}\frac{ \log m_{\alpha}(X,\epsilon)}{-\log\epsilon}.\]
\end{definition}

Following Robinson \cite{JCR}, we have the following estimate.
\begin{lemma}\label{COR}
Suppose that $\mathfrak{B}$ is a Banach space and $X \subset \mathfrak{B}$. Then
\[\tau^{*}(X) \leq \sigma_{\alpha}(X), \, \, \, \forall \, \, \alpha >0.\]
\end{lemma}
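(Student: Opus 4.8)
The plan is to compare the two subspace-dimension functions $\romd_{\theta}(X,\epsilon)$ and $m_{\alpha}(X,\epsilon)$ scale-by-scale and then pass to the limit. Fix $\alpha > 0$ and an arbitrary $\theta > 0$. The key observation is that, for this fixed $\theta$, the condition defining $m_{\alpha}$ --- the existence of a unit-norm functional $\Phi$ with $|\Phi(x-y)| \ge \alpha\epsilon$ --- is \emph{stronger} than the condition defining $\romd_{\theta}$ --- the existence of some functional $\phi$ with $|\phi(x-y)| \ge \epsilon^{1+\theta}$ --- once $\epsilon$ is small. Indeed $\alpha\epsilon \ge \epsilon^{1+\theta}$ is equivalent to $\epsilon^{\theta} \le \alpha$, and since $\epsilon^{\theta} \to 0$ as $\epsilon \to 0$ there is a threshold $\epsilon_{0} = \alpha^{1/\theta}$ below which this holds.

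First I would take, for each $\epsilon < \epsilon_{0}$, a subspace $V \subseteq \mathfrak{B}^{*}$ of dimension $m_{\alpha}(X,\epsilon)$ realising the infimum in the definition of $m_{\alpha}$. Then for every $x,y \in X$ with $\|x-y\| \ge \epsilon$ there is $\Phi \in V$ with $\|\Phi\| = 1$ and $|\Phi(x-y)| \ge \alpha\epsilon \ge \epsilon^{1+\theta}$. Since $\Phi \in V$, this same subspace $V$ also witnesses the requirement in the definition of $\romd_{\theta}$, and hence $\romd_{\theta}(X,\epsilon) \le \dim V = m_{\alpha}(X,\epsilon)$ for all $\epsilon < \epsilon_{0}$.

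Next I would divide by $-\log\epsilon > 0$ and take $\limsup_{\epsilon \to 0}$; because the inequality $\romd_{\theta}(X,\epsilon) \le m_{\alpha}(X,\epsilon)$ holds on an entire interval $(0,\epsilon_{0})$, the limits superior inherit it, giving $\tau^{*}_{\theta}(X) \le \sigma_{\alpha}(X)$. Crucially the right-hand side is independent of $\theta$, so this bound holds for every $\theta > 0$; letting $\theta \to 0$ then yields $\tau^{*}(X) = \lim_{\theta \to 0}\tau^{*}_{\theta}(X) \le \sigma_{\alpha}(X)$, which is the claim.

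I do not anticipate a genuine obstacle, as the argument is in essence a threshold comparison followed by monotonicity of $\limsup$. The only point requiring care is the order of the two limiting operations: one must fix $\theta$, establish the scale-by-scale domination (valid only for $\epsilon < \epsilon_{0}$), take the $\epsilon$-limit to control $\tau^{*}_{\theta}$, and only afterwards send $\theta \to 0$. The threshold $\epsilon_{0}$ depends on $\theta$, so there is no single comparison valid uniformly in $\theta$, which is why the two limits cannot be interchanged casually.
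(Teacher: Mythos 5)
Your proof is correct and is essentially identical to the paper's: both fix $\theta>0$, use the threshold $\epsilon^{\theta}<\alpha$ so that $\alpha\epsilon\geq\epsilon^{1+\theta}$, observe that any subspace realising $m_{\alpha}(X,\epsilon)$ then also witnesses the condition defining $\romd_{\theta}(X,\epsilon)$, and deduce $\tau^{*}_{\theta}(X)\leq\sigma_{\alpha}(X)$ for every $\theta>0$ before sending $\theta\to 0$. Your write-up is in fact slightly more explicit than the paper's about the $\theta$-dependence of the threshold $\epsilon_{0}=\alpha^{1/\theta}$ and the order of the two limiting operations, but the underlying argument is the same.
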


\begin{proof}
Let $\theta > 0.$ Let $V$ be a finite--dimensional subspace of $\mathcal{B}^{*}$ such that for all $x,y \in X$ with $\|x-y\| \geq \epsilon$, there exists $\phi \in V$ with $\|\phi\|=1$ and $|\phi(x-y)| \geq \alpha\epsilon$.
If $\epsilon$ is small enough such that $\epsilon^{\theta} < \alpha$, then
\[|\phi(x-y)| \geq \epsilon^{1+\theta},\]
which gives 
\begin{align}
\tau^{*}_{\theta}(X) & \leq \sigma_{\alpha}(X), \, \, \, \forall \, \, \theta >0. \qedhere
\end{align}
\end{proof}

We now prove that in a Hilbert space the dual thickness is always bounded above by the thickness. 

\begin{lemma}
Suppose $H$ is a Hilbert space and $X \subset H$, such that $\tau(X) < \infty$. Then 
\[\tau^{*}(X) \leq \tau(X).\]
\end{lemma}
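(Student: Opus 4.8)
The plan is to exploit the special structure of Hilbert spaces, namely that every finite-dimensional subspace $V \subset H$ can be identified with its dual via the inner product, so that a good $\epsilon$-approximating subspace for $X$ directly supplies linear functionals that separate points of $X$. Concretely, suppose $\tau > \tau(X)$, so that for all small $\epsilon$ there exists a subspace $V = V_\epsilon$ with $\dim V \leq C\epsilon^{-\tau}$ and $\romdist_H(x, V) \leq \epsilon$ for all $x \in X$. Let $P = P_\epsilon$ denote the orthogonal projection onto $V$. First I would show that $P$ nearly preserves the separation of well-separated pairs: if $x, y \in X$ with $\|x - y\| \geq \delta$ for an appropriate $\delta$ slightly larger than $\epsilon$, then $\|P(x-y)\| \geq \|x-y\| - 2\epsilon$ is still comparable to $\|x-y\|$, since $\|x - Px\|, \|y - Py\| \leq \epsilon$.

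The key step is to convert this projection bound into the functional inequality required by the definition of $\romd_\theta(X, \epsilon)$. In a Hilbert space, for $w = P(x-y) \in V$ the functional $\phi(\cdot) = \langle \cdot, w/\|w\|\rangle$ lies in $V \cong V^* \subset H^*$, has norm one, and satisfies $\phi(x-y) = \langle x-y, w/\|w\|\rangle = \langle P(x-y), w/\|w\|\rangle = \|w\| = \|P(x-y)\|$, using that $w \in V$ and $P$ is self-adjoint. Thus the single subspace $U = V_\epsilon \subset H^*$ (of the same dimension) has the property that every sufficiently separated pair $x, y$ admits a unit functional $\phi \in U$ with $\phi(x-y) \geq \|P(x-y)\| \geq c\|x-y\|$. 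This is exactly the kind of lower bound appearing in the definition of $m_\alpha(X, \epsilon)$, and indeed $\tau(X)<\infty$ ensures these subspaces are finite-dimensional.

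I would then package this as an estimate on $\sigma_\alpha(X)$ and invoke Lemma \ref{COR}. The point is that the construction above shows, for a fixed constant $\alpha$ (depending only on the geometry, essentially $\alpha = 1$ up to the $2\epsilon$ correction), that $m_\alpha(X, \epsilon') \leq \dim V_\epsilon \leq C\epsilon^{-\tau}$ for $\epsilon'$ comparable to $\epsilon$. Taking the $\limsup$ as $\epsilon \to 0$ gives $\sigma_\alpha(X) \leq \tau$ for every $\tau > \tau(X)$, hence $\sigma_\alpha(X) \leq \tau(X)$. Lemma \ref{COR} then yields $\tau^*(X) \leq \sigma_\alpha(X) \leq \tau(X)$, completing the proof.

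The main obstacle I anticipate is bookkeeping the relationship between the scale $\epsilon$ at which the approximating subspace is chosen and the scale at which separation is tested: one must choose $\delta$ (the separation threshold for $x, y$) large enough relative to $\epsilon$ that the $2\epsilon$ loss from projection is harmless, while keeping $\delta$ of the same logarithmic order as $\epsilon$ so that the $\limsup$ defining $\sigma_\alpha$ is unaffected. A clean way to handle this is to fix the separation scale equal to, say, $4\epsilon$ and verify that $\|P(x-y)\| \geq \tfrac{1}{2}\|x-y\|$ there, so that the functional lower bound $\phi(x-y) \geq \tfrac{1}{2}\|x-y\| \geq 2\epsilon$ holds with a uniform constant $\alpha = \tfrac12$; since $\romd_\theta$ and $m_\alpha$ are defined via $\limsup$ and rescaling $\epsilon$ by a fixed factor does not change the exponent, this causes no difficulty.
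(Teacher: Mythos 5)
Your proposal follows essentially the same route as the paper: take the thickness-approximating subspace $V_\epsilon \subset H$, use the orthogonal projection $P$ onto it and the norm-one Riesz functional $\phi = \langle\,\cdot\,, w/\|w\|\rangle$ with $w = P(x-y)$ (which, by self-adjointness, is exactly the paper's $L_z \circ P$), bound $\sigma_\alpha(X)$ by $\tau(X)$, and conclude via Lemma \ref{COR}. In fact your bookkeeping is more careful than the paper's own proof: the paper asserts $\|P(x)-P(y)\| = \|x-y\|$, which fails in general since $x-y$ need not lie in $U$, whereas your $2\epsilon$-loss estimate $\|P(x-y)\| \geq \|x-y\| - 2\epsilon$ at separation scale $4\epsilon$ with $\alpha = \tfrac12$ quietly repairs this slip without affecting the exponent.
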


\begin{proof}
Take $\epsilon >0$ and let $U$ be a finite--dimensional subspace of $H$ such that 
\[\dim(U) = d = d(X,\epsilon) \, \, \, \text{and} \, \, \, \dist(X,U) < \epsilon.\]

Now we consider $P$, the orthonormal projection onto $U$. For all $x \in X$ we have
\[\|x-Px\| = \dist(x,U) <\epsilon.\]
Let 
\[V = \{ L\circ P : L \in U^{*}\} \subset H^{*}.\]
It is easy to see that $V$ is finite--dimensional and that $\dim(V) = d$.

Suppose that $x,y \in X$ satisfy $\|x-y\| \geq \epsilon.$
Since $P(x-y) \equiv z \in U$, we define $L \colon U \to \mathbb{R}$ such that
\[L_{z}(u)= \frac{\langle u,z \rangle}{\|z\|},\]
for all $u \in U$.

Then, $\Phi = L_{z} \circ P \in V$ and $\|\Phi\| =1$. Moreover,
\[|\Phi(x-y)| = |L_{z}(z)| = \|z\| = \|P(x) - P(y)\| = \|x-y\| \geq \epsilon.\]
Therefore,
\begin{align}
\tau^{*}(X) & \leq \sigma_{1}(X) \leq \tau(X).\qedhere
\end{align}
\end{proof}

In the context of a Banach space, there is no known relationship between the thickness and the dual thickness. In the paper of Robinson \cite{Rob}, it is claimed that the dual thickness is bounded above by the box--counting dimension, but there is an error in the proof given there. However, we can prove that the dual thickness is bounded above by the box dimension of the Minkowski difference set $X-X$, which in particular is always bounded above by twice the box dimension of $X$.

\begin{lemma}
Suppose $\mathfrak{B}$ is a Banach space and $X \subset \mathfrak{B}$ compact. Then
\[\tau^{*}(X) \leq \romd_{B}(X-X) \leq 2\romd_{B}(X).\]
\end{lemma}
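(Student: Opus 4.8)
The plan is to prove the final statement, namely $\tau^*(X) \leq \romd_B(X-X) \leq 2\romd_B(X)$, by bounding the dual thickness through the auxiliary exponent $\sigma_\alpha$ introduced above and exploiting the compactness of $X-X$ via a covering argument. The second inequality $\romd_B(X-X) \leq 2\romd_B(X)$ is already established in the proof of Lemma \ref{1EL}, where it is shown that $X-X$ is the image of $X \times X$ under the Lipschitz subtraction map; so I would simply cite that and concentrate on the first inequality.

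For $\tau^*(X) \leq \romd_B(X-X)$, I would fix some $d > \romd_B(X-X)$ and, by Lemma \ref{COR}, it suffices to bound $\sigma_\alpha(X)$ for a suitable fixed $\alpha$ (say $\alpha = \tfrac12$) by $d$, since $\tau^*(X) \leq \sigma_\alpha(X)$ for every $\alpha > 0$. The strategy mirrors Lemma \ref{1EL}: given $\epsilon > 0$, cover the difference set $X-X$ by at most $N(X-X, \tfrac{\epsilon}{4}) \leq C\epsilon^{-d}$ balls of radius $\tfrac{\epsilon}{4}$ with centres $z_i$, and use the Hahn--Banach theorem to produce norming functionals $f_i \in \mathfrak{B}^*$ with $\|f_i\| = 1$ and $f_i(z_i) = \|z_i\|$. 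Setting $V = \lspan\{f_1, \ldots, f_{m}\}$, where $m$ is the number of balls, gives a subspace of $\mathfrak{B}^*$ of dimension at most $C\epsilon^{-d}$.

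The key verification is the approximation estimate: for any $x,y \in X$ with $\|x-y\| \geq \epsilon$, set $z = x-y$, pick $z_i$ with $\|z - z_i\| \leq \tfrac{\epsilon}{4}$, and estimate, just as in Lemma \ref{1EL},
\[
|f_i(z)| \geq \|z_i\| - \|z - z_i\| \geq \|z\| - 2\|z - z_i\| \geq \epsilon - \tfrac{\epsilon}{2} = \tfrac{\epsilon}{2}.
\]
Since $\|f_i\| = 1$, this shows that $f_i \in V$ witnesses the defining property of $m_{1/2}(X,\epsilon)$, so $m_{1/2}(X,\epsilon) \leq C\epsilon^{-d}$. Taking logarithms and the $\limsup$ gives $\sigma_{1/2}(X) \leq d$, and letting $d \downarrow \romd_B(X-X)$ yields $\tau^*(X) \leq \sigma_{1/2}(X) \leq \romd_B(X-X)$.

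I expect the main obstacle to be bookkeeping rather than conceptual: one must confirm that the functional $f_i$ really lands in the fixed subspace $V$ (it does, being one of the generators) and that the same $V$ works uniformly over all admissible pairs $(x,y)$ at scale $\epsilon$, so that a single dimension count controls $m_{1/2}(X,\epsilon)$. A subtler point is the interplay of the two limits in the definition of $\tau^*$, namely $\tau^*(X) = \lim_{\theta \to 0} \tau^*_\theta(X)$; this is handled cleanly by Lemma \ref{COR}, which already absorbs the $\theta$-limit by showing $\tau^*_\theta(X) \leq \sigma_\alpha(X)$ for every $\theta > 0$, so I would lean on that lemma precisely to avoid re-deriving the $\epsilon^{1+\theta}$ comparison by hand.
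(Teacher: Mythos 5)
Your proposal is correct and takes essentially the same route as the paper's own proof: cover the difference set $X-X$ at a fixed fraction of $\epsilon$, take Hahn--Banach norming functionals at the centres, let $V$ be their span, and conclude via Lemma \ref{COR} that $\tau^{*}(X) \leq \sigma_{\alpha}(X) \leq \romd_{B}(X-X)$, with the second inequality coming from the Lipschitz-subtraction argument of Lemma \ref{1EL}. The only difference is bookkeeping of constants: the paper covers by radius-$\epsilon$ balls and works with the threshold $\|z\| \geq 50\epsilon$, obtaining $\sigma_{48/100}(X) \leq \romd_{B}(X-X)$, whereas you cover at scale $\epsilon/4$ and obtain $\sigma_{1/2}(X) \leq \romd_{B}(X-X)$ --- both suffice since Lemma \ref{COR} holds for every $\alpha > 0$.
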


\begin{proof}
Let $Z = X-X$. 
Given $\epsilon >0$ and any $d > \romd_{B}(Z)$, we find $N=N(Z,\epsilon)\sim \epsilon^{-d}$ balls of radius $\epsilon$ with centres $z_{j}$ that cover $Z$. By the Hahn--Banach theorem, for any $j \leq N$, we obtain linear functionals $\phi_{j}$ that satisfy
\[\|\phi_{j}\| = 1 \, \, \text{and} \,\, |\phi_{j}(z_{j})|=\|z_{j}\|.\]
We now define $V = \lspan(\phi_{1},...,\phi_{N}).$

Suppose now that $z \in Z$ such that $\|z\| \geq 50 \epsilon$ and let $j \leq N$ such that $\|z-z_{j}\| < \epsilon.$ Thus,
\[\|z_{j}\| \geq 49\epsilon,\]
which gives
\[|\phi_{j}(z)| = |\phi_{j}(z-z_{j}) + \phi_{j}(z_{j})| \geq \|z_{j}\| - \epsilon \geq 48\epsilon.\]
This shows that
\[\sigma_{48/100}(X) \leq \romd_{B}(Z),\] and the conclusion is immediate by Lemma \ref{COR}.
\end{proof}

\subsection{`Orthogonal' sequences in $\ell_{p}$}
In the remainder of this paper, we concentrate on a particular subset of $\ell_{p}$, for $p \in [1,\infty]$, and prove that some of the inequalities we know so far are sharp.
These sets were first discussed by Ben Artzi et al \cite{BA} and have been used by Pinto De Moura \& Robinson \cite{DR} as examples to show that the H\"older exponent of the inverses in Hunt and Kaloshin's Theorem \ref{ET2} is asymptotically sharp.

Take $p \geq 1$ and let $(\alpha_{n})_{n=1}^{\infty}$ be a decreasing sequence  such that $\alpha_{n}\rightarrow 0.$ Then, for all $n$ let $e_{n} = (0,0,...,1,0,...)$ and define
\[A=\{a_{1},...,a_{n},...\} = \{\alpha_{1}e_{1},...,\alpha_{n}e_{n},...\}.\]
It is obvious that $a_{i} \in \ell_{p}$, for all $1\leq p < \infty,$ hence $A \subset \ell_{p}$. 
We also have $a_{i} \in c_{0}$, where $c_{0}$ is the space of real sequences converging to zero equipped with the $\ell_{\infty}$ norm. 
Following Robinson \cite{JCR}, we know that
\[\romd_{B}(A;\ell_{p}) = \limsup_{n \rightarrow \infty} \frac{\log n}{-\log\|a_{n}\|} = \inf\left\lbrace\nu > 0: \sum_{n=1}^{\infty} |a_{n}|^{\nu} < \infty\right\rbrace,\]
for all $p$. For the rest of this section, we implicitly understand the case $p = \infty$ as meaning $c_{0}$.

We first state without proof some additional properties of the box--counting dimension that we will need. The proofs can be found in Robinson \cite{JCR}.

\pagebreak

\begin{lemma}\label{BC2}\hfill
\begin{enumerate}
\item Let $\mathfrak{B}$ a Banach space and $A \subset \mathfrak{B}$ compact. Let $M(A,\epsilon)$ be the maximum number of points in $A$ that are $\epsilon$--separated, meaning that $\|x-y\| \geq \epsilon$, for any $x,y$ in that collection. Then
\[\romd_{B}(A) = \limsup_{\epsilon\rightarrow 0} \frac{\log M(A,\epsilon)}{-\log\epsilon}.\]
\item Suppose $(\mathfrak{B}_{1}, \| \cdot \|_{1})$ and $(\mathfrak{B}_{2}, \| \cdot \|_{2})$ are Banach spaces, $\mathfrak{B}_{1}\subseteq \mathfrak{B}_{2}$ and
\[\|u\|_{2} \leq C \|u\|_{1} ,\, \, \forall \,\, u \in \mathfrak{B}_{1}.\]
Then, for all compact subsets $X \subset \mathfrak{B}_{1}$,
\[\romd_{B}(X;\mathfrak{B}_{2})\leq \romd_{B}(X;\mathfrak{B}_{1}).\]
\end{enumerate}
\end{lemma}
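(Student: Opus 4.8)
The plan is to treat the two parts separately, each reducing to elementary comparison inequalities followed by a limsup computation in which multiplicative constants turn out to be harmless. For part (1) the strategy is to sandwich the separation number $M(A,\epsilon)$ between two covering numbers. First I would establish $N(A,\epsilon)\le M(A,\epsilon)$: since $A$ is compact it is totally bounded, so $M(A,\epsilon)$ is finite and a maximal $\epsilon$-separated subset $\{x_{1},\dots,x_{M}\}$ exists with $M=M(A,\epsilon)$. Maximality forces every $x\in A$ to satisfy $\|x-x_{i}\|<\epsilon$ for some $i$, for otherwise $\{x_{1},\dots,x_{M},x\}$ would be a strictly larger $\epsilon$-separated set; hence the balls $B(x_{i},\epsilon)$ cover $A$ and have centres in $A$, giving $N(A,\epsilon)\le M$. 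Conversely I would show $M(A,\epsilon)\le N(A,\epsilon/3)$: given a cover of $A$ by $N(A,\epsilon/3)$ balls of radius $\epsilon/3$, any two points lying in a common ball are at distance at most $2\epsilon/3<\epsilon$, so an $\epsilon$-separated set meets each ball at most once. Together these yield
\[
N(A,\epsilon)\le M(A,\epsilon)\le N(A,\epsilon/3).
\]

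It then remains to pass to the $\limsup$. The left-hand inequality gives $\romd_{B}(A)\le \limsup_{\epsilon\to0}\frac{\log M(A,\epsilon)}{-\log\epsilon}$ at once. For the reverse, I substitute $\delta=\epsilon/3$ in the right-hand inequality and use $-\log(3\delta)=-\log\delta-\log 3$ together with $\frac{-\log\delta}{-\log\delta-\log 3}\to1$ as $\delta\to0$, so the factor $3$ does not affect the limit and the limsup of $\frac{\log M(A,\epsilon)}{-\log\epsilon}$ is bounded above by $\romd_{B}(A)$. The single point requiring care—which I regard as bookkeeping rather than a genuine obstacle—is keeping the covering and separation radii consistent so that equality cases (two $\epsilon$-separated points at distance exactly $\epsilon$ sitting in one ball) never occur; choosing the radius $\epsilon/3$ rather than $\epsilon/2$ makes the relevant inequality strict and removes this ambiguity, and in any event the constant is absorbed in the limit.

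For part (2) the observation is that the inclusion $\iota\colon(\mathfrak{B}_{1},\|\cdot\|_{1})\to(\mathfrak{B}_{2},\|\cdot\|_{2})$ given by $\iota(u)=u$ satisfies $\|\iota(u)-\iota(v)\|_{2}=\|u-v\|_{2}\le C\|u-v\|_{1}$ and is therefore Lipschitz, with $\iota(X)=X$ as a set. Part (1) of Lemma \ref{2} then immediately yields $\romd_{B}(X;\mathfrak{B}_{2})=\romd_{B}(\iota(X);\mathfrak{B}_{2})\le\romd_{B}(X;\mathfrak{B}_{1})$. Alternatively, and just as quickly, one argues directly: since $\|u-c\|_{1}\le\epsilon$ implies $\|u-c\|_{2}\le C\epsilon$, every cover of $X$ by $\|\cdot\|_{1}$-balls of radius $\epsilon$ with centres in $X$ is also a cover by $\|\cdot\|_{2}$-balls of radius $C\epsilon$ with the same centres, so $N_{2}(X,C\epsilon)\le N_{1}(X,\epsilon)$, and taking the $\limsup$ with the constant $C$ absorbed exactly as in part (1) gives the claim. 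Neither part presents a real difficulty; the entire content is the covering/separation comparison of part (1).
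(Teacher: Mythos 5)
Your proof is correct. The paper itself states this lemma \emph{without} proof, deferring to Robinson \cite{JCR}; your argument --- the sandwich $N(A,\epsilon)\le M(A,\epsilon)\le N(A,\epsilon/3)$ obtained from maximality of a separated set, with the constant $3$ absorbed in the limsup, together with the identity-map/rescaled-cover argument for part (2) --- is precisely the standard proof found in that reference, and your observation that the non-strict separation condition $\|x-y\|\ge\epsilon$ makes $\epsilon/3$ (rather than $\epsilon/2$) the safe covering radius is exactly the right point of care.
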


We now prove that the inequality $\romd_{B}(X-X) \leq 2\romd_{B}(X)$ is sharp for this particular class of examples.

\begin{lemma}\label{BCA}
For all $p \geq 1,$
\begin{equation}
\romd_{B}(A-A;\ell_{p}) = 2\romd_{B}(A).
\end{equation}
\end{lemma}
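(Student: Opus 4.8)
The inequality $\romd_{B}(A-A;\ell_{p}) \leq 2\romd_{B}(A)$ is already in hand, since $A-A$ is the Lipschitz image of $A\times A$ under $(x,y)\mapsto x-y$; so the entire content is the reverse inequality $\romd_{B}(A-A;\ell_{p}) \geq 2\romd_{B}(A)$. The plan is to exhibit, at a suitable sequence of scales, a separated family inside $A-A$ of size comparable to $N^{2}$, and then feed this into the maximal $\epsilon$-separated-set description of the box-counting dimension (Lemma \ref{BC2}).

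Concretely, fix $N$ and consider the $N(N-1)$ points $a_{i}-a_{j}=\alpha_{i}e_{i}-\alpha_{j}e_{j}$ with $i,j\leq N$ and $i\neq j$. The key claim is that this family is $\alpha_{N}$-separated, simultaneously for every $p\in[1,\infty]$. To see this, observe first that, because the $\alpha_{n}$ are strictly positive, each such point is a vector whose unique positive entry $\alpha_{i}$ sits in coordinate $i$ and whose unique negative entry $-\alpha_{j}$ sits in coordinate $j$; hence distinct ordered pairs $(i,j)$ give distinct vectors. Now take two of these points and look at their difference $v$. In any coordinate $m$ each of the two points contributes one of $0$, $+\alpha_{m}$, or $-\alpha_{m}$, so every entry of $v$ lies in $\{0,\pm\alpha_{m},\pm2\alpha_{m}\}$; in particular any nonzero entry has absolute value at least $\alpha_{m}\geq\alpha_{N}$, using $m\leq N$ and the monotonicity of $(\alpha_{n})$. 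Since the two points are distinct we have $v\neq0$, whence $\|v\|_{p}\geq\|v\|_{\infty}\geq\alpha_{N}$. This treats $c_{0}$ (the case $p=\infty$) at the same time.

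It follows that $M(A-A,\alpha_{N})\geq N(N-1)$, and therefore
\[
\romd_{B}(A-A) \;\geq\; \limsup_{N\to\infty}\frac{\log\bigl(N(N-1)\bigr)}{-\log\alpha_{N}} \;=\; 2\limsup_{N\to\infty}\frac{\log N}{-\log\alpha_{N}} \;=\; 2\romd_{B}(A),
\]
where the middle equality uses $-\log\alpha_{N}\to\infty$ to absorb the discrepancy between $\log(N(N-1))$ and $2\log N$, and the last equality is the formula for $\romd_{B}(A)$ recalled above. Combined with the upper bound, this yields the claimed equality.

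The argument is elementary; the one place needing genuine care is the separation claim, whose point is that the very sparse, two-entry structure of the vectors $a_{i}-a_{j}$ prevents cancellation from ever producing a small nonzero coordinate, so that a single universal bound $\alpha_{N}$ works for all $p$ at once and the full $N^{2}$ count of pairs survives. The only other mild subtlety is the passage from the particular scales $\epsilon=\alpha_{N}$ back to the $\limsup$ over all $\epsilon\to0$, which is immediate since the $\limsup$ over all $\epsilon$ dominates the $\limsup$ taken along the subsequence $\alpha_{N}\to0$.
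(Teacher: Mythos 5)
Your proof is correct and takes essentially the same route as the paper: both exhibit the quadratically many differences $a_{i}-a_{j}$, $i,j\leq N$, as an $\epsilon$-separated family at a scale comparable to $\alpha_{N}$, note that separation in the sup norm transfers to every $\ell_{p}$ since $\|v\|_{p}\geq\|v\|_{\infty}$, and conclude via the maximal separated-set characterization of the box-counting dimension. If anything, your direct coordinate analysis of $v=(a_{i}-a_{j})-(a_{k}-a_{l})$, showing every nonzero entry has modulus at least $\alpha_{N}$, is cleaner and more explicit than the paper's argument (which passes through a covering of $A$, a reduction to $c_{0}$, and a slightly convoluted maximality claim), while reaching the same count and the same limit computation.
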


\begin{proof}
We know that $\romd_{B}(A-A;\ell_{p}) \leq 2\romd_{B}(A).$
Hence, we need to show \[\romd_{B}(A-A;\ell_{p}) \geq 2\romd_{B}(A).\]  
We also have that $\ell_{p} \subset c_{0}$ and
\[\|u\|_{\infty} \leq \|u\|_{p},\]
for any $p < \infty.$
By part 2 of Lemma \ref{BC2}, we deduce that
\[\romd_{B}(A-A;\ell_{p})\geq \romd_{B}(A-A;c_{o}).\]
Thus, it suffices to prove that
\[\romd_{B}(A-A;c_{0}) \geq 2\romd_{B}(A).\]

Let $\epsilon >0.$ Take $N \in \mathbb{N}$, such that $\|a_{N}\|_{\infty} < \epsilon \leq \|a_{N-1}\|_{\infty}.$ 
Then
\[A \subseteq \bigcup_{i=0}^{N-1} B(x_{i},\epsilon),\]
where $x_{i} = a_{i} , \forall \, 1\leq i \leq N-1$ and $x_{0} = 0$. It is also obvious that $N=N(A,\epsilon)$. 
Set $z_{ij}=x_{i}-x_{j},$ for all $i\neq j$. 
Then, we claim that
\[A-A \subset \bigcup_{i,j=0}^{N-1} B(z_{ij},\epsilon) \bigcup B(0,\epsilon).\]

Take $z = a_{m}-a_{n} \in A-A,$ such that $\|z\|_{\infty} \geq \epsilon.$ Otherwise, $z \in B(0,\epsilon).$
Suppose, without loss of generality that $\|a_{m}\|_{\infty} \leq \|a_{n}\|_{\infty}.$Then, 
\[\|z\|_{\infty} = \|a_{n}\|_{\infty} \geq \epsilon,\] which implies that $n\leq N-1$ and $z_{n0} = x_{n}= a_{n}$.
We now have two cases: if $\|a_{m}\|_{\infty} < \epsilon,$ then
\[ \|a_{n} - a_{m} -x_{n}\|_{\infty} = \|a_{m}\|_{\infty}<\epsilon,\]
while if $\|a_{m}\|_{\infty} \geq \epsilon,$ then 
\[\|a_{n} - a_{m} - z_{nm}\|_{\infty}=0<\epsilon.\]

Now, let $M=M(A-A,\epsilon)$ denote the maximum number of $\epsilon$--separated points in $A-A$, i.e. 
\[\|y_{k}-y_{l}\|_{\infty} \geq \epsilon,\]
for all $y_{k},y_{l}$ in that collection.
Then, we claim that
\[\{z_{ij}\}_{i,j=0}^{N-1}\subseteq\{y_{k}\}_{k=1}^{M}.\]
Indeed suppose that for some $i,j ,i\neq j $, $z_{ij}= x_{i}-x_{j} = a_{i} -a_{j} \neq y_{k}, \, \forall \, k.$
Let $a_{n_{k}},a_{m_{k}}$ such that $y_{k} = a_{n_{k}}-a_{m_{k}}$ and assume wlog that $i\neq n_{k}$. Then, 
\begin{align*}
|z_{ij} - y_{k}\|_{\infty}& =\|a_{i}-a_{j}-a_{n_{k}}+a_{m_{k}}\| \\
& = \|\alpha_{i}e_{i}-\alpha_{j}e_{j}-\alpha_{n_{k}}e_{k}+\alpha_{m_{k}}e_{k}\|_{\infty}\\
& \geq \|\alpha_{i}e_{i}\| \geq \epsilon,
\end{align*} contradicting the fact that $M(A-A,\epsilon)$ is the maximum number of $\epsilon$--separated points.
All in all, we deduce that 
\[M(A-A,\epsilon) \geq (N(A,\epsilon)-1)^{2} -N(A,\epsilon) +1=N^{2}-3N+2.\]

Using part 1 of Lemma \ref{BC2} it follows that
\begin{align*}
\romd_{B}(A-A;\infty) & = \limsup_{\epsilon\rightarrow 0} \frac{\log M(A-A,\epsilon)}{-\log\epsilon}\\
& \geq  \limsup_{\epsilon\rightarrow 0} \frac{\log\left(N^{2}(A,\epsilon)-3N(A,\epsilon)+1\right)}{-\log\epsilon} \\
& \geq \limsup_{\epsilon\rightarrow 0} \frac{2\log N(A,\epsilon)}{-\log\epsilon} = 2\romd_{B}(A).\qedhere
\end{align*}
\end{proof}

We now show that the thickness exponent and box--counting dimension of this `orthogonal' sequence coincide whenever $p \leq 2$. 

We first make the straightforward remark that whenever $(\mathcal{B}_{1},\| \cdot \|_{1})\subseteq (\mathcal{B}_{2},\| \cdot \|_{2})$, $X \subset \mathcal{B}_{1}$ and 
\[\|u\|_{2} \leq C\|u\|_{1} \, \, \forall \, \, u \in \mathcal{B}_{1},\]
then 
\[\tau(X;\mathcal{B}_{2}) \leq \tau(X;\mathcal{B}_{1}).\]
In particular, since $\tau(X)\leq \romd_{B}(X)$, we only need to show that the thickness exponent of this orthogonal sequence equals the box--counting dimension when $p=2,$ This has already been proven in Robinson \cite{JCR}; here we give an alternative and slightly easier proof. We first need the following lemma.

\begin{lemma}
Take $\{\alpha_{n}\}_{n=1}^{\infty}$ as usual and let $A_{k}=\{a_{1},...,a_{k}\}= \{\alpha_{1}e_{1},...,\\ \alpha_{k}e_{k}\} \subset \ell_{2}.$
Then, 
\[d_{\ell_{2}}(A_{k},\epsilon) \geq k \left(1-\frac{\epsilon}{\|a_{k}\|}\right)^{2}.\]
\end{lemma}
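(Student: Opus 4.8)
The plan is to fix a subspace realising the minimal dimension and run a trace argument for orthogonal projections. Let $V$ be a subspace of $\ell_{2}$ with $\dim(V) = d = d_{\ell_{2}}(A_{k},\epsilon)$ and $\dist(a_{i},V) \leq \epsilon$ for every $i = 1,\ldots,k$, and let $P$ denote the orthogonal projection of $\ell_{2}$ onto $V$. Since $P$ has finite rank $d$, it is trace-class with $\trace(P)=d$; this is the quantity we will bound from below.

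The first step is to pass to the normalised vectors. Because $a_{i}=\alpha_{i}e_{i}$ with $\|e_{i}\|=1$ and the sequence $(\alpha_{n})$ is decreasing, for each $i \leq k$ we have $\|a_{k}\|=\alpha_{k}\leq\alpha_{i}$, so
\[
\dist(e_{i},V) = \frac{\dist(a_{i},V)}{\alpha_{i}} \leq \frac{\epsilon}{\alpha_{i}} \leq \frac{\epsilon}{\|a_{k}\|}.
\]
Since $\dist(e_{i},V)=\|e_{i}-Pe_{i}\|$, the triangle inequality gives $\|Pe_{i}\| \geq 1-\epsilon/\|a_{k}\|$, and in the regime $\epsilon < \|a_{k}\|$ (the only case of interest, as the thickness is computed via a $\limsup$ as $\epsilon\to 0$) squaring yields $\|Pe_{i}\|^{2} \geq (1-\epsilon/\|a_{k}\|)^{2}$.

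The second step invokes the identity $\langle Pe_{i},e_{i}\rangle = \|Pe_{i}\|^{2}$, valid because $P$ is self-adjoint and idempotent. Extending $\{e_{1},\ldots,e_{k}\}$ to a complete orthonormal basis $(e_{j})_{j=1}^{\infty}$ of $\ell_{2}$ and computing the trace in this basis gives
\[
d = \trace(P) = \sum_{j=1}^{\infty}\langle Pe_{j},e_{j}\rangle = \sum_{j=1}^{\infty}\|Pe_{j}\|^{2} \geq \sum_{i=1}^{k}\|Pe_{i}\|^{2} \geq k\left(1-\frac{\epsilon}{\|a_{k}\|}\right)^{2},
\]
where the first inequality drops the non-negative tail terms. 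As $V$ was an arbitrary subspace of minimal dimension, this is exactly the claimed bound.

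The one step that carries the real content is the trace computation: the point is that an orthogonal projection onto a $d$-dimensional subspace has trace $d$, that the diagonal entries $\langle Pe_{j},e_{j}\rangle$ in any orthonormal basis equal $\|Pe_{j}\|^{2}\geq 0$, and that truncating this sum to the $k$ basis vectors coming from $A_{k}$ can therefore only decrease it. Everything else is the elementary reduction to the unit vectors $e_{i}$, where the monotonicity of $(\alpha_{n})$ is used to replace each $\alpha_{i}$ by the smallest value $\alpha_{k}=\|a_{k}\|$.
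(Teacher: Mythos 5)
Your proof is correct and follows essentially the same route as the paper: both reduce to the unit vectors $e_{i}$, bound $\|Pe_{i}\|$ below via the triangle inequality and the monotonicity of $(\alpha_{n})$, and conclude with the identity $\mathrm{rank}\,P=\sum_{i}\|Pe_{i}\|^{2}$ for an orthogonal projection. The only (cosmetic) differences are that you project directly onto the minimal subspace $V$ where the paper projects onto the span of approximating vectors $v_{i}\in V$, and that you explicitly flag the regime $\epsilon<\|a_{k}\|$ needed for the squaring step, which the paper leaves implicit.
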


\begin{proof}
We first remind ourselves that $d=d(A_{k},\epsilon)$ denotes the smallest dimension among those finite-dimensional subspaces $V$ of $\ell_{2}$ that satisfy
\[\mathrm{dist}(x,V) \leq \epsilon, \, \, \forall \, \, x \in A_{k}.\]

For all $i \leq k$ take $v_{i} \in V$ such that $\|v_{i}-a_{i}\|_{2} \leq \epsilon.$ Then,
\[\dim(\mathrm{span}(v_{1},...,v_{k})) = d.\]
Let $P$ denote the orthogonal projection onto $\mathrm{span}(v_{1},...,v_{k})$. Thus, we have
\[\|a_{i}-Pa_{i}\|_{2} = \mathrm{dist}(a_{i},\mathrm{span}(v_{1},...,v_{k})) \leq \|v_{i}-a_{i}\|_{2} \leq \epsilon.\]
Moreover,
\[\|a_{i}-Pa_{i}\| = |\alpha_{i}|\|e_{i}-P(e_{i})\| \geq |\alpha_{k}|(1-\|Pe_{i}\|),\] which gives
\[\|Pe_{i}\| \geq 1-\frac{\epsilon}{\|a_{k}\|}.\]
We know that for every orthogonal projection in a Hilbert space,
\[\mathrm{rank} \,P = \sum_{i=1}^{\infty} \|Pe_{i}\|^{2},\]
which proves that
\begin{align}
d & \geq k \left(1-\frac{\epsilon}{\|a_{k}\|}\right)^{2}.\qedhere
\end{align}
\end{proof}

We can now show that the thickness exponent and the box--counting dimension are equal in this case. For the proof, we use the argument in Robinson \cite{JCR} and the above lemma.
\begin{lemma}
\[\tau(A;\ell_{2}) = \romd_{B}(A) = \limsup_{n \rightarrow \infty} \frac{\log n}{-\log \|a_{n}\|}.\]
\end{lemma}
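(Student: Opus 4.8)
The plan is to establish only the reverse inequality $\tau(A;\ell_2) \geq \romd_{B}(A)$, since the bound $\tau(A;\ell_2) \leq \romd_{B}(A)$ is already recorded in the introduction; the two together give the claimed equality. Throughout write $d_* = \romd_{B}(A) = \limsup_{n\to\infty}\frac{\log n}{-\log\|a_n\|}$; if $d_* = 0$ there is nothing to prove, since $\tau \geq 0$ always.

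First I would record a monotonicity observation: because $A_k = \{a_1,\dots,a_k\} \subseteq A$, any subspace $V$ that $\epsilon$-approximates every point of $A$ in particular $\epsilon$-approximates every point of $A_k$, so the minimising dimension satisfies $d_{\ell_2}(A_k,\epsilon) \leq d_{\ell_2}(A,\epsilon)$ for all $k$ and all $\epsilon > 0$. Feeding this into the preceding lemma gives the uniform lower bound
\[
d_{\ell_2}(A,\epsilon) \;\geq\; k\left(1 - \frac{\epsilon}{\|a_k\|}\right)^{2} \qquad \text{for every } k \in \mathbb{N},\ \epsilon > 0.
\]

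The crux is to couple $k$ and $\epsilon$. I would pick a subsequence $(n_j)$ realising the limsup defining $d_*$, so that $\frac{\log n_j}{-\log\|a_{n_j}\|} \to d_*$, and set $\epsilon_j = \tfrac{1}{2}\|a_{n_j}\|$. Taking $k = n_j$ in the displayed inequality yields $d_{\ell_2}(A,\epsilon_j) \geq \tfrac{1}{4}n_j$. Since $(\alpha_n)$ decreases to $0$ we have $n_j \to \infty$, hence $\epsilon_j \to 0$ and $-\log\|a_{n_j}\| \to \infty$, and therefore
\[
\frac{\log d_{\ell_2}(A,\epsilon_j)}{-\log\epsilon_j} \;\geq\; \frac{\log(n_j/4)}{-\log\|a_{n_j}\| + \log 2},
\]
whose right-hand side tends to $d_*$ because the fixed additive constants $\log 4$ and $\log 2$ are negligible against the two diverging terms $\log n_j$ and $-\log\|a_{n_j}\|$. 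Passing to the $\limsup$ over $\epsilon \to 0$ then gives $\tau(A;\ell_2) \geq d_*$, as required; the identical computation covers $d_* = \infty$.

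I expect the only delicate point to be this calibration step: one must choose $\epsilon_j$ of the same order as $\|a_{n_j}\|$ (a fixed fraction works) so that the factor $(1 - \epsilon_j/\|a_{n_j}\|)^2$ stays bounded below by a positive constant while still forcing $\epsilon_j \to 0$. Any multiplicative or additive constants produced in the process wash out in the $\limsup$ and leave the exponent unchanged, so no optimisation of the fraction $\tfrac12$ is needed.
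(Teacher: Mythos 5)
Your proposal is correct and takes essentially the same approach as the paper: both arguments rest on the preceding lemma's bound $d_{\ell_{2}}(A_{k},\epsilon)\geq k\left(1-\frac{\epsilon}{\|a_{k}\|}\right)^{2}$, the monotonicity $d(A_{k},\epsilon)\leq d(A,\epsilon)$, a choice of $\epsilon$ as a fixed fraction of $\|a_{n}\|$ so that the quadratic factor stays bounded below, and a limsup computation in which the additive constants wash out, with the upper bound $\tau(A)\leq \romd_{B}(A)$ quoted rather than reproved. Your only deviation is cosmetic: you calibrate along a subsequence realising the limsup with $\epsilon_{j}=\tfrac{1}{2}\|a_{n_{j}}\|$, whereas the paper runs over all $n$ using the flat-stretch index $n'$ and $\epsilon_{n}=(\|a_{n}\|+\|a_{n'+1}\|)/4$ — a device your direct substitution $k=n_{j}$ legitimately bypasses.
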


\begin{proof}
Take $n$ large enough such that $\|a_{n}\| < 1$ and take $n'\geq n$ such that
\[|\alpha_{n}|=|\alpha_{n+1}|=...=|\alpha_{n'}|>|\alpha_{n'+1}|.\]
Let 
\[\epsilon_{n} = \frac{\|a_{n}\| + \|a_{n'+1}\|}{4},\] which implies

\[\frac{\|a_{n}\|}{4} < \epsilon_{n} < \frac{|a_{n}\|}{2}.\]
By the previous lemma, we have
\[d(A,\epsilon) \geq d(A_{n'},\epsilon) \geq \frac{n'}{4} \geq \frac{n}{4} .\]

Therefore, we obtain
\begin{align*}
\tau(A) & \geq \limsup_{n \rightarrow \infty} \frac{d(A,\epsilon_{n})}{-\log \epsilon_{n}} \geq \limsup_{n \rightarrow \infty} \frac{\log n -\log 4}{\log 4- \log \|a_{n}\|} \\
& = \limsup_{n \rightarrow \infty} \left(\frac{\log n}{-\log\|a_{n}\|} \frac{1 - \frac{\log4}{\log n}}{1 - \frac{\log4}{\log \|a_{n}\|}}\right)\\
& = \limsup_{n \rightarrow \infty}\frac{\log n}{-\log\|a_{n}\|} = \romd_{B}(A).\qedhere
\end{align*}
\end{proof}

It still remains open whether the thickness exponent and box--counting dimension of this particular set coincide, whenever $p>2.$ However, we give a lower bound for the thickness exponent of $A$ that depends on the box--counting dimension and the conjugate exponent of $p$. 

\begin{lemma}\label{AP1}
Let $p \in [1,\infty]$ and $A \subset \ell_{p}$ as before. Then
\begin{equation}\label{AP}
\tau(A) \geq \frac{q \, \romd_{B}(A)}{q + \romd_{B}(A)},
\end{equation} where $q$ is the conjugate exponent of $p$.
\end{lemma}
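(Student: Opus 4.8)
The plan is to extract, for each $n$, a lower bound $d(A,\epsilon)\ge n$ valid on a suitable range of $\epsilon$, and then substitute it into the definition of $\tau$. Since $\|a_n\|_p=\alpha_n$ and, as recorded above, $\romd_B(A)=\limsup_n\frac{\log n}{-\log\alpha_n}$, write $d=\romd_B(A)$ and let $q$ be the conjugate exponent of $p$. The core of the argument will be the implication
\[
\alpha_n>\epsilon\,n^{1/q}\quad\Longrightarrow\quad d(A,\epsilon)\ge n.
\]
It is here that $q$ enters, through an application of H\"older's inequality and the identification $\ell_p^{*}=\ell_q$; this is also why the resulting bound weakens as $p$ moves away from $1$.

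To establish the implication, let $V$ be a subspace attaining $m=d(A,\epsilon)$; since $V$ is finite--dimensional the distances are attained, so for each $i\le n$ there is $v_i\in V$ with $\|\alpha_i e_i-v_i\|_p\le\epsilon$. Let $\pi_n\colon\ell_p\to\RR^n$ be the truncation onto the first $n$ coordinates, which does not increase the $\ell_p$--norm, and set $w_i=\pi_n(v_i)$. Then the $n\times n$ matrix $M$ with $i$-th column $w_i$ satisfies $\|w_i-\alpha_i\hat{e}_i\|_p\le\epsilon$, where $\hat{e}_1,\dots,\hat{e}_n$ is the standard basis of $\RR^n$; equivalently $E:=M-\mathrm{diag}(\alpha_1,\dots,\alpha_n)$ has every column of $\ell_p$--norm at most $\epsilon$. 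Because $\rank(M)=\dim\lspan(w_1,\dots,w_n)\le\dim V=m$, it suffices to bound $\rank(M)$ below, and I would do this via the left null space $N=\{\psi\in\RR^n:\langle\psi,w_i\rangle=0\ \text{for all}\ i\}$.

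For $\psi\in N$ one has $\alpha_i\psi_i=-\langle\psi,E\hat{e}_i\rangle$, whence H\"older's inequality gives $|\psi_i|\le\epsilon\|\psi\|_q/\alpha_i\le\epsilon\|\psi\|_q/\alpha_n$ for every $i\le n$. Summing $q$-th powers yields $\|\psi\|_q^q\le n(\epsilon/\alpha_n)^q\|\psi\|_q^q$, so the hypothesis $n(\epsilon/\alpha_n)^q<1$ forces $\psi=0$; hence $N=\{0\}$, $\rank(M)=n$, and $m\ge n$, proving the implication. To conclude I would take $\epsilon_n=\tfrac12\alpha_n n^{-1/q}$, which satisfies $\alpha_n>\epsilon_n n^{1/q}$ and tends to $0$, so $d(A,\epsilon_n)\ge n$, and then evaluate
\[
\frac{\log d(A,\epsilon_n)}{-\log\epsilon_n}\ge\frac{\log n}{\log 2-\log\alpha_n+\tfrac1q\log n}=\frac{1}{\tfrac{\log 2}{\log n}+\tfrac{-\log\alpha_n}{\log n}+\tfrac1q}.
\]
Passing to a subsequence along which $\frac{\log n}{-\log\alpha_n}\to d$, the right-hand side tends to $\bigl(\tfrac1d+\tfrac1q\bigr)^{-1}=\frac{qd}{q+d}$, and since $\tau(A)$ is a $\limsup$ over $\epsilon\to0$ we obtain $\tau(A)\ge\frac{qd}{q+d}$.

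The main obstacle is the absence of orthogonal projections in $\ell_p$, which rules out the Hilbert--space identity $\rank P=\sum_i\|Pe_i\|^2$ used previously; the substitute is the passage to the left null space together with the coordinatewise H\"older estimate, and it is precisely this duality step that produces the conjugate exponent $q$. The remaining verifications are routine: that truncation is norm--nonincreasing, that the approximating vectors $v_i$ exist, and that the extreme cases $p=1$ (with $q=\infty$, recovering $\tau(A)\ge d$) and $p=\infty$ (with $q=1$) are covered by interpreting $n^{1/q}$ and $\|\cdot\|_q$ in the limiting sense.
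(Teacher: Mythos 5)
Your proposal is correct and takes essentially the same route as the paper: both hinge on the choice $\epsilon_k=\tfrac12\alpha_k k^{-1/q}$ and the same H\"older estimate showing that any subspace coming within $\epsilon_k$ of $a_1,\dots,a_k$ must have dimension at least $k$, followed by the identical limsup computation along $\epsilon_k\to0$. The only difference is cosmetic: the paper verifies linear independence of the approximants $v_1,\dots,v_k$ directly in $\ell_p$ via the inequality $\sum_i|\lambda_i|\le k^{1/q}\bigl(\sum_i|\lambda_i|^p\bigr)^{1/p}$, whereas you prove the transposed statement (trivial left null space of the truncated matrix), which is the same duality estimate applied on the other side.
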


We note that for $p=1$ the right hand side of \eqref{AP} becomes the box-counting dimension, giving a direct proof of what we proved in the previous lemma.

\begin{proof}[Proof of Lemma \ref{AP1}]
Suppose $a_{1},...,a_{k} \in A$. Consider $\epsilon_{k} = \frac{1}{2}\|a_{k}\|k^{-1/q}.$ Let $U$ be a subspace of $\ell_{p}$ such that $\dim(U)=d(\{a_{1},...,a_{k}\},\epsilon_{k})\leq d(A,\epsilon_{k})$
and let $v_{1},...,v_{k} \in U$ be such that
\[\|v_{i} - a_{i}\| \leq \epsilon_{k}.\]

We claim that $v_{1},,,,v_{k}$ are linearly independent and in particular that $k \leq \dim(U).$
Indeed, consider $\lambda_{i} \in \mathbb{R}$ such that 
\[ \sum_{i=1}^{k} \lambda_{i}v_{i} = 0.\]
Then

\begin{align*}
\epsilon_{k}\sum_{i=1}^{k} \left|\lambda_{i}\right| & \geq \sum_{i=1}^{k} \left|\lambda_{i} \right| \|v_{i} - a_{i}\|_{p} = \sum_{i=1}^{k_{0}}\| \lambda_{i} v_{i} - \lambda_{i} a_{i}\|_{p} \\
& \geq \| \sum_{i=1}^{k_{0}} \lambda_{i} v_{i} - \lambda_{i} a_{i}\|_{p}=
\|\sum_{i=1}^{k_{0}} \lambda_{i} a_{i}\|_{p}\\
& = \left(\sum_{i=1}^{k_{0}} \left| \lambda_{i} \alpha_{i} \right|^{p}\right)^{1/p}\geq |\alpha_{k}| \left(\sum_{i=1}^{k_{0}} |\lambda_{i}|^{p}\right)^{1/p}\\
& \geq k^{-1/q} \sum_{i=1}^{k} \left|\lambda_i \right|.
\end{align*}
Therefore,
\[\sum_{i=1}^{k} \left|\lambda_{i} \right| (- \frac{1}{2} k^{-1/q}) \geq 0,\] giving that $\lambda_{i} = 0$ for every $i$.

Thus,\[\romd(A,\epsilon_{k}) \geq k.\]
Now, we make the following computation
\begin{align*}
\tau(A) & \geq \limsup_{k \rightarrow \infty} \frac{\log \romd(A,\epsilon_{k})}{-\log\epsilon_{k}} \geq \limsup_{k \rightarrow \infty} \frac{\log k}{-\log(\|a_{k}\| k^{-1/q})}\\
& = \limsup_{k \rightarrow \infty} \frac{\log k}{1/q\log k - \log\|a_{k}\| }\\
& = \frac{1}{\liminf \left(\frac{1}{q} - \frac{\log\|a_{k}\|}{\log k}\right)} = \frac{1}{\frac{1}{q}+\frac{1}{\romd_{B}(A)}}\\
& = \frac{q \, \romd_{B}(A)}{q + \romd_{B}(A)}.\qedhere
\end{align*}
\end{proof}

As mentioned in the beginning of the section, we do not know if the dual thickness is always bounded above by the box--counting dimension. However, Pinto de Moura and Robinson \cite{DR} relied on this fact to prove that the dual thickness and box--counting dimension of these orthogonal sequences coincide for every $p \in [1,\infty]$. In the next lemma, we show that this upper bound is true in this particular case.
\begin{lemma}Suppose that $A=\{a_{1},...,a_{k},\dots\}\subset\ell_{p}$ is as usual. Then,
\[\tau^{*}(A) \leq \romd_{B}(A), \, \, \mbox{for any} \, \, p \, \in [1,\infty].\]
\end{lemma}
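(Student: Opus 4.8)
The plan is to prove that $\tau^*(A) \leq \romd_B(A)$ for the orthogonal sequence $A = \{\alpha_n e_n\}$ in $\ell_p$ by directly constructing, for each small $\epsilon > 0$, a low-dimensional subspace $V$ of $\ell_p^*$ that separates all $\epsilon$-distant pairs in $A$ in the sense required by the dual thickness. By Lemma \ref{COR}, it suffices to bound $\sigma_\alpha(A)$ for some fixed $\alpha > 0$, so I would aim to show $\sigma_\alpha(A) \leq \romd_B(A)$ for a suitable $\alpha$, which sidesteps the $\theta \to 0$ limit entirely.

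First I would recall that $\romd_B(A) = \limsup_{n} \frac{\log n}{-\log\|a_n\|}$, and that for $\epsilon > 0$ there is an index $N = N(\epsilon)$ with $\|a_N\| < \epsilon \leq \|a_{N-1}\|$, so that $N \sim \epsilon^{-\romd_B(A)}$ up to the $\limsup$. The key structural observation is that any two distinct points $a_m, a_n \in A$ are supported on disjoint coordinates, so $a_m - a_n = \alpha_m e_m - \alpha_n e_n$. If $\|a_m - a_n\| \geq \epsilon$, then (for $p < \infty$) at least one of $|\alpha_m|, |\alpha_n|$ is comparable to $\epsilon$; say the larger one, with index $\ell$, satisfies $|\alpha_\ell| \geq \epsilon / 2^{1/p}$. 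Since $\|a_\ell\| \geq \epsilon/2^{1/p}$ forces $\ell \leq N$ roughly, the ``active'' coordinate always lies among the first $\sim N$ indices. This is exactly the gain that keeps the dimension small.

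Next I would define the separating subspace explicitly. Take $V = \lspan\{e_1^*, \dots, e_N^*\} \subset \ell_p^*$, where $e_i^*$ is the coordinate functional $x \mapsto x_i$; in $\ell_p$ this has $\|e_i^*\| = 1$ in the dual norm $\ell_q$. Then $\dim V = N$. Given $a_m, a_n$ with $\|a_m - a_n\| \geq \epsilon$, let $\ell$ be the dominant index as above, so $\ell \leq N$, and take $\phi = e_\ell^* \in V$ with $\|\phi\| = 1$. Then $|\phi(a_m - a_n)| = |\alpha_\ell| \geq \epsilon / 2^{1/p} = \alpha \epsilon$ with $\alpha = 2^{-1/p}$ fixed. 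This verifies $m_\alpha(A, \epsilon) \leq N(\epsilon)$, hence
\[
\sigma_\alpha(A) = \limsup_{\epsilon \to 0} \frac{\log m_\alpha(A, \epsilon)}{-\log \epsilon} \leq \limsup_{\epsilon \to 0} \frac{\log N(\epsilon)}{-\log \epsilon} = \romd_B(A),
\]
and Lemma \ref{COR} gives $\tau^*(A) \leq \sigma_\alpha(A) \leq \romd_B(A)$, completing the proof.

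The main obstacle I anticipate is pinning down the elementary but slightly fiddly bookkeeping that the dominant coordinate index always lies in $\{1, \dots, N\}$; this needs the decreasing-sequence hypothesis on $(\alpha_n)$ together with the choice of $N$ tied to $\epsilon$, and care to get clean constants. A secondary point is handling the endpoint cases $p = 1$ and $p = \infty$ (the latter understood as $c_0$): for $p = \infty$ one uses $\|a_m - a_n\|_\infty = \max(|\alpha_m|, |\alpha_n|)$, so the dominant coordinate argument is even cleaner and $\alpha = 1$ works; for $p=1$ it also goes through since a lower bound on the $\ell_1$ norm again forces one coordinate to be of order $\epsilon$. I would present the $p < \infty$ case in detail and remark that the $c_0$ case is identical with the $\ell_\infty$ norm.
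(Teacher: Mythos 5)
Your proposal is correct and follows essentially the same route as the paper: both arguments bound $\sigma_{\alpha}(A)$ for a fixed $\alpha>0$ by taking $V$ to be the span of the first $\sim N(A,\epsilon)$ coordinate functionals in $\ell_{p}^{*}$, and then conclude via Lemma \ref{COR}. The only real difference is the separation step. The paper does not exploit the disjoint supports as directly as you do: it approximates $a_{n},a_{m}$ (with $\|a_{n}-a_{m}\|_{p}\geq 50\epsilon$) by cover elements $a_{i_{n}},a_{i_{m}}$ with $i_{n},i_{m}\leq N$, and tests against the normalized difference $\Phi=(\phi_{i_{n}}-\phi_{i_{m}})/\|\phi_{i_{n}}-\phi_{i_{m}}\|$, tracking constants ($\|a_{i_{n}}\|_{p}\geq 24\epsilon$, $|\Phi(a_{n}-a_{m})|\geq 10\epsilon$) to get $\sigma_{1/5}(A)\leq\romd_{B}(A)$; your single dominant-coordinate functional $e_{\ell}^{*}$ is cleaner and yields $\sigma_{2^{-1/p}}(A)$ directly. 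One small repair is needed, exactly at the point you flagged: with $N$ defined by $\|a_{N}\|<\epsilon\leq\|a_{N-1}\|$, the dominant index $\ell$ need \emph{not} satisfy $\ell\leq N$, since the decreasing sequence may have $2^{-1/p}\epsilon\leq|\alpha_{\ell}|<\epsilon$ for some $\ell>N$, so $V=\lspan(e_{1}^{*},\dots,e_{N}^{*})$ as written can miss the dominant coordinate. The fix is routine: span the coordinate functionals up to the first index at which $|\alpha_{\ell}|$ drops below $2^{-1/p}\epsilon$, i.e.\ observe $m_{\alpha}(A,\epsilon)\leq N(A,\alpha\epsilon)$ with $\alpha=2^{-1/p}$; since $\alpha$ is a fixed constant, the limsup defining $\sigma_{\alpha}$ is unchanged and the bound $\sigma_{\alpha}(A)\leq\romd_{B}(A)$, hence $\tau^{*}(A)\leq\romd_{B}(A)$, stands. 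Your treatment of the endpoint cases ($\alpha=1$ for $c_{0}$, $\alpha=1/2$ for $p=1$) is consistent with this.
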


\begin{proof}
Take any $\epsilon > 0$. Then, there exists some $N=N(A,\epsilon)$, such that $|\alpha_{N}| < \epsilon$ and $|\alpha_{N-1}|\geq \epsilon.$ Then, $\forall i \leq N$ let $x \in \ell_{p}$ and set $\phi_{i}(x) \in \mathbb{R}$ to be the $i$-th coordinate of $x$.
Since we are just projecting on one direction we immediately obtain that $\|\phi_{i}\|=1$ and
\[\phi_{i}(a_{j}) = \delta_{ij}\alpha_{i}.\]
Let
\[V = \mathrm{span}(\phi_{1},...,\phi_{N}) \subset \ell_{p^{*}}.\]

Now, take any $a_{n},a_{m}$ in $A$ such that $\|a_{m}\|_{p}\leq\|a_{n}\|_{p}$ and $\|a_{n}-a_{m}\|_{p} \geq 50\epsilon.$ Let $i_{n},i_{m}\leq N$ such that
\[\|a_{n}-a_{i_{n}}\|_{p} < \epsilon \, \, \, \text{and} \, \, \, \|a_{m}-a_{i_{m}}\|_{p} < \epsilon.\]
Since $\|a_{m}\|_{p}\leq\|a_{n}\|_{p}$ and $\|a_{n}-a_{m}\|_{p} \geq 50\epsilon,$ we obtain $\|a_{n}\|_{p} \geq 25\epsilon$ which gives $\|a_{i_{n}}\|_{p} \geq  24\epsilon.$

Set 
\[\Phi = \frac{\phi_{i_{n}} - \phi_{i_{m}}}{\|\phi_{i_{n}} - \phi_{i_{m}}\|},\] and we have
$\Phi \in V, \|\Phi\|=1$ and
\begin{align*}
|\Phi(a_{n}-a_{m})| & \geq \frac{|(\phi_{i_{n}} - \phi_{i_{m}})(a_{n}-a_{i_{n}}+a_{i_{n}}-a_{i_{m}}+a_{i_{m}}-a_{m})|}{2}\\
& \geq \frac{20\epsilon}{2}= 10\epsilon.
\end{align*}

Arguing as in Lemma \eqref{BCA}, we obtain
$\tau^{*}(A) \leq \sigma_{1/5}(A) \leq \romd_{B}(A).$
\end{proof}

All in all, there is no known relation between the thickness and the dual thickness in the context of a Banach space. Moreover, it is still open whether or not the dual thickness is always bounded above by the box dimension.

\section{Conclusion}
Using a combination of the method introduced by Hunt and Kaloshin and the argument of Robinson \cite{Rob}, we established an embedding theorem which does not require the use of the dual thickness.
It still remains open whether we can extend the result without the restriction that the thickness is less than one. Moreover, we discussed some interesting properties of a particular subset of $\ell_{p}$ which is used by Pinto de Moura \& Robinson \cite{DR} as an example to prove that Hunt and Kaloshin's theorem for the Hilbert space case is asymptotically sharp. 

We note that all the results we stated and proved here require a linear embedding which somehow restricts the regularity that we can achieve for the inverse. A significant open problem is whether we can find an embedding of a subset $X$ of either a Banach or a Hilbert space, that is bi--Lipschitz, but not necessarily linear. An example in \cite{JCR} shows that this is not always possible for subsets with finite box--counting dimension.

However, it is known that a necessary but not sufficient condition for such an embedding to exist is that $X$ must have finite Assouad dimension (see \cite{Ass}) which can be conceived as a more local version of the box--counting dimension. A method similar to that we presented here was used by Olson and Robinson \cite{OR} to provide `almost' bi--Lipschitz linear embeddings of subsets with finite Assouad dimension. This dimension is also discussed in more recent work by Gottlieb, Lee \& Krauthgamer \cite{GLK} and by Naor \& Neiman \cite{NN}, who establish nonlinear embedding theorems.
\pagebreak

\end{document}